\theoremstyle{definition}
\newtheorem{thm}{Theorem}[section]
\newtheorem{lem}[thm]{Lemma}
\newtheorem{prop}[thm]{Proposition}
\newtheorem{defn}[thm]{Definition}
\newtheorem{eg}[thm]{Example}
\newtheorem{rem}[thm]{Remark}
\newtheorem{maintheorem}{Theorem}
\newtheorem{question}[thm]{Question}
\newtheorem{prop-defn}[thm]{Proposition-Definition}
\newcommand{\RR}{\mathbb R}
\newcommand{\PP}{\mathbb P}
\newcommand{\ZZ}{\mathbb Z}
\newcommand{\M}{\mathrm{M}}
\newcommand{\D}{\mathrm{D}}
\newcommand{\be}{\mathbf{e}}
\newcommand{\kk}{\mathbbm k}
\renewcommand{\L}{[L]}
\renewcommand{\emptyset}{\varnothing}
\newcommand{\one}{{\mathbf 1}}
\title{K-classes of delta-matroids and equivariant localization}
\author{Christopher Eur, Matt Larson, Hunter Spink}
\begin{document}

\maketitle

\vspace{-10pt}

\begin{abstract}
Delta-matroids are ``type B'' generalizations of matroids in the same way that maximal orthogonal Grassmannians are generalizations of Grassmannians.  A delta-matroid analogue of the Tutte polynomial of a matroid is the interlace polynomial.  We give a geometric interpretation for the interlace polynomial via the $K$-theory of maximal orthogonal Grassmannians.  To do so, we develop a new Hirzebruch--Riemann--Roch-type formula for the type B permutohedral variety.
\end{abstract}

\section{Introduction}

For a nonnegative integer $n$, let $[n] = \{1, \dots, n\}$, and for a subset $S\subseteq [n]$, let $\be_S = \sum_{i\in S} \be_i$ be the sum of the corresponding standard basis vectors in $\RR^n$.
Let $[\bar n] = \{\bar 1, \ldots, \bar n\}$, and 
consider $[n, \bar n] = [n]\sqcup [\bar n]$ equipped with the involution $i \mapsto \bar i$.
Writing $\be_{\bar i} = -\be_i$, let $\be_S = \sum_{i\in S} \be_i$ for a subset $S\subseteq [n,\bar n]$.
A subset $S\subseteq [n,\bar n]$ is \emph{admissible} if $\{i,\bar i\} \not\subset S$ for all $i\in [n]$.  Note that a \emph{maximal admissible subset} of $[n,\bar n]$ has cardinality $n$.

\begin{defn}
A \emph{delta-matroid} $\D$ on $[n,\bar n]$ is a nonempty collection $\mathcal F$ of maximal admissible subsets of $[n,\bar n]$ such that
each edge of the polytope
\[
P(\D) = \text{the convex hull of }\{\be_{B\cap [n]} : B\in \mathcal F\} \subset \RR^n
\]
is a parallel translate of $\be_i$ or $\be_i\pm \be_j$ for some $i,j\in[n]$.
\end{defn}

The collection $\mathcal F$ is called the \emph{feasible sets} of $\D$, and $P(\D)$ is called the \emph{base polytope} of $\D$.
One often works with the following translation of the twice-dilated base polytope
\[
\widehat{P(\D)} = 2P(\D) - (1, \dots, 1) = \text{the convex hull of $\{\be_B : B \in \mathcal F\}$} \subset\RR^n.
\]
Delta-matroids generalize matroids as the ``minuscule type B matroids'' in the theory of Coxeter matroids \cite{Gelfand1987a,BGW03}, and as ``2-matroids'' in the theory of multimatroids \cite{BouShelter}.
The Tutte polynomial of a matroid \cite{Tut67, Cra69} admits a delta-matroid analogue called the \emph{interlace polynomial}, introduced in \cite{InterlaceBollobas,Bridgeboom}.

\begin{defn}
For a delta-matroid $\D$ on $[n,\bar n]$ with feasible sets $\mathcal{F}$  and a subset $S \subseteq [n]$, let 
\[
d_{\D}(S) = \min_{B \in \mathcal{F}} \big(| S \cup (B\cap [n])| - |S\cap B\cap [n]| \big), \text{ the lattice distance between $\be_{S}$ and $P(\D)$}.
\]
Then, the \emph{interlace polynomial} $\operatorname{Int}_{\D}(v)\in \ZZ[v]$ of $\D$ is defined as
\[
\operatorname{Int}_{\D}(v) = \sum_{S \subseteq [n]} v^{d_\D(S)}.
\]
\end{defn}

Similar to the Tutte polynomial of a matroid, the interlace polynomial has several alternative definitions: it satisfies a deletion-contraction recursion \cite[Theorem 30]{Bridgeboom}, it is an evaluation of the rank generating function of a delta-matroid \cite{DeltaRank}, and $\operatorname{Int}_{\D}(v-1)$ has an activities description \cite{MorseActivity}. Additionally, its evaluation at $q=0$ gives the number of feasible sets. 
Here, we show that Fink and Speyer's geometric interpretation of Tutte polynomials via the $K$-theory of Grassmannians \cite{FinkSpeyer} also generalizes to interlace polynomials.
Let us first recall their result.

\medskip
Each $r$-dimensional linear space $L \subseteq \kk^n$ over a field $\kk$ gives rise to a matroid $\M$ on $[n]$ and a point $\L$ in the Grassmannian $\operatorname{Gr}(r; n)$.
The torus $T = (\kk^*)^n$ acts on $\operatorname{Gr}(r; n)$, and we consider the torus-orbit-closure $\overline{T\cdot \L}$ of $L$.
The $K$-class of the structure sheaf $[\mathcal{O}_{\overline{T \cdot \L}}]$ in Grothendieck ring $K(\operatorname{Gr}(r; n))$ of vector bundles on $\operatorname{Gr}(r;n)$ depends only on $\M$, and it admits a combinatorial formula which makes sense for any matroid $\M$ of rank $r$ on $[n]$.
This formula is used to define a class $y(\M) \in K(\operatorname{Gr}(r; n))$ such that $y(\M) = [\mathcal O_{\overline{T\cdot \L}}]$ whenever $\M$ has a realization $L$.
 
Now, consider the diagram

\begin{center}
\begin{tikzcd}
& \operatorname{Fl}(1, r, n-1; n) \arrow[dr] \arrow[ddr, "{\pi}_{1n}"'] \arrow[dl, "{\pi}_r"]& \\ 
 \operatorname{Gr}(r; n)& &  \operatorname{Fl}(1, n-1; n) \arrow[d, hook] \\ 
 & &\mathbb{P}^{n-1} \times \mathbb{P}^{n-1}
\end{tikzcd}
\end{center}
where $\pi_r$ and $\pi_{1n}$ are the natural forgetful maps.
Then \cite[Theorem 5.1]{FinkSpeyer} states that
$$\pi_{1n*} \pi_r^* \big( y(\M) \cdot [\mathcal{O}(1)] \big) = \operatorname{T}_\M(\alpha, \beta),$$
where $\mathcal{O}(1)$ is the line bundle on $\operatorname{Gr}(r; n)$ defining the Pl\"{u}cker embedding, $\alpha$ and $\beta$ are the $K$-classes of the structure sheaves of hyperplanes in each of the $\PP^{n-1}$ factors, and $\operatorname{T}_\M$ is the Tutte polynomial of $\M$. 
This result was subsequently generalized to Tutte polynomials of morphisms of matroids in \cite{CDMS18,FlagTutte}.
Here, we establish a similar geometric interpretation for the interlace polynomials of delta-matroids via the $K$-theory of maximal orthogonal Grassmanians.

\medskip
Let $\kk^{2n+1}$ have coordinates labelled $\bar{n}, \dotsc, \bar{1}, 0, 1, \dotsc, n$.
Let $q$ be the nondegenerate quadratic form on $\kk^{2n+1}$ given by $q(x) = x_1 x_{\bar{1}} + \dotsb + x_n x_{\bar{n}} + x_0^2$.
For $0\leq r \leq n$, 
let $\operatorname{OGr}(r;2n+1)$ be the \emph{orthogonal Grassmannian}, which is the subvariety of $\operatorname{Gr}(r;2n+1)$ consisting of isotropic $r$-dimensional subspaces, i.e.,
\[
\operatorname{OGr}(r;2n+1) = \{\text{$r$-dimensional linear subspaces $L\subset \kk^{2n+1}$ such that $q|_L$ is identically zero}\}.
\]
The action of the torus $T = (\kk^*)^n$ on $\kk^{2n+1}$ given by
\[
(t_1, \ldots, t_n) \cdot (x_{\bar n}, \ldots, x_{\bar 1}, x_0, x_1, \ldots, x_n) = (t_n^{-1}x_{\bar n}, \ldots, t_1^{-1}x_{\bar 1}, x_0, t_1x_1, \ldots, t_nx_n)
\]
preserves the quadratic form $q$, and hence induces a $T$-action on $\operatorname{OGr}(r;2n+1)$.
One has the $T$-equivariant Pl\"ucker embedding $\operatorname{OGr}(r;2n+1) \hookrightarrow \operatorname{Gr}(r;2n+1) \hookrightarrow \PP(\bigwedge^r \kk^{2n+1})$.

\medskip
The \emph{maximal orthogonal Grassmannian} is $\operatorname{OGr}(n;2n+1)$.  Points on $\operatorname{OGr}(n;2n+1)$ realize delta-matroids in the same way that points on the usual Grassmannian realize matroids.  More precisely, \cite[Proposition 6.2]{EFLS} \cite{Gelfand1987a} showed that the torus-orbit-closure $\overline{T\cdot \L}$ of a point $\L\in \operatorname{OGr}(n;2n+1)$, considered as a $T$-invariant subvariety of $\PP(\bigwedge^n \kk^{2n+1})$ via the Pl\"ucker embedding, has moment polytope $\mu(\overline{T\cdot \L})$ equal to $\widehat{P(\D)}$, where $\D$ is a delta-matroid with the set of feasible sets
\[
\{\text{maximal admissible $B\subset [n,\bar n]$ such that the $B$-th Pl\"ucker coordinate of $L$ is nonzero}\}.
\]
Using this polyhedral property, we construct for any (not necessarily realizable) delta-matroid $\D$ an element $y(\D)$ in the Grothendieck ring $ K(\operatorname{OGr}(n;2n+1))$ of vector bundles on $\operatorname{OGr}(n;2n+1)$ (see Proposition~\ref{prop:yDdef}).\footnote{We caution that, unlike the matroid case in \cite{FinkSpeyer}, the class $y(\D)$ of a delta-matroid $\D$ with a realization $[L]\in \operatorname{OGr}(n;2n+1)$ may not be equal to the $K$-class of the structure sheaf $[\mathcal O_{\overline {T\cdot \L}}]$, although it is closely related, see \Cref{prop:isVal} and \Cref{prop:OTL}.  For a detailed discussion of $[\mathcal O_{\overline{T\cdot \L}}]$, see \Cref{rem:OTL} and \Cref{sec:egs} .}

\smallskip
To relate the $K$-class $y(\D)$ to the the interlace polynomial, we consider the orthogonal partial flag variety $\operatorname{OFl}(1,n; 2n+1) \subset \operatorname{OGr}(1; 2n+1) \times \operatorname{OGr}(n; 2n+1)$.
Note that $\operatorname{OGr}(1; 2n+1)$ is a smooth quadric inside of $\operatorname{Gr}(1; 2n+1) = \mathbb{P}^{2n}$.  We have the diagram
\begin{center}
\begin{tikzcd}
& \operatorname{OFl}(1, n; 2n+1) \arrow[dr] \arrow[ddr, "{\pi}_1"'] \arrow[dl, "{\pi}_n"]& \\ 
 \operatorname{OGr}(n; 2n+1)& &  \operatorname{OGr}(1; 2n+1) \arrow[d, hook] \\ 
 & &\mathbb{P}^{2n}.
\end{tikzcd}
\end{center}
Let $\mathcal{O}(1)$ denote the ample line bundle that generates the Picard group of $\operatorname{OGr}(n;2n+1)$.
Its square $\mathcal O(2)$ defines the Pl\"ucker embedding $\operatorname{OGr}(n;2n+1) \hookrightarrow \operatorname{Gr}(n;2n+1) \hookrightarrow \PP(\bigwedge^n \kk^{2n+1})$.
This fact about $\mathcal O(2)$ follows from the description of the Picard group of $\operatorname{OGr}(n; 2n+1)$ in terms of the representation theory of $\operatorname{SO}(2n+1)$; see \cite[Section 2.8]{MR1782635} for a summary of general theory, and \cite[Chapter 19.4]{FultonHarris} for features particular to $\operatorname{SO}(2n+1)$.
The line bundle $\mathcal{O}(1)$ defines the Spinor embedding of $\operatorname{OGr}(n; 2n+1)$ into $\mathbb{P}^{2^n - 1}$. 
Recall that $K(\mathbb{P}^{2n})\simeq \mathbb{Z}[u]/(u^{2n+1})$, where $u$ is the structure sheaf of a hyperplane in $\PP^{2n}$. So we may represent any class in $K(\mathbb{P}^{2n})$ uniquely as a polynomial in $u$ of degree at most $2n$. 

\begin{maintheorem}\label{thm:FS}
Let $\operatorname{Int}_\D(v)\in \ZZ[v]$ be the interlace polynomial of a delta-matroid $\D$.  We have
$$\pi_{1*}\pi_n^* \big(y(\D) \cdot [\mathcal{O}(1)] \big) = u \cdot \operatorname{Int}_{\D}(u - 1) \in K(\mathbb{P}^{2n}).$$
\end{maintheorem}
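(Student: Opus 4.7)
My plan is to work in $T$-equivariant $K$-theory and evaluate both sides by localizing at torus fixed points, using the valuativity of $y(\D)$ (\Cref{prop:isVal}) to reduce from arbitrary delta-matroids to a spanning set, and applying the new Hirzebruch--Riemann--Roch-type formula for the type B permutohedral variety (promised in the abstract) as the main computational tool.

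The torus $T = (\kk^*)^n$ acts on $OGr(n;2n+1)$ with fixed points indexed by maximal admissible subsets $B \subset [n,\bar n]$, and by construction (\Cref{prop:yDdef}) the localization $y(\D)|_{p_B}$ depends only on the polyhedral data of $\widehat{P(\D)}$ near the vertex $\be_B$. The $T$-action on $\PP^{2n}$ has $2n+1$ fixed points, with weights $0, \pm\be_1, \dots, \pm\be_n$, and $\pi_{1*}\pi_n^*$ is $T$-equivariant. I would first write $\pi_{1*}\pi_n^*(y(\D) \cdot [\mathcal O(1)])$ as an explicit sum of localized contributions over pairs $(B, p)$ with $B \in \mathcal F$ and $p$ a fixed point of $\PP^{2n}$, and then invoke the HRR-type formula to repackage this sum as a polytopal sum indexed by lattice points $\be_S \in \{0,1\}^n$, with each $\be_S$ contributing a factor of $u \cdot (u-1)^{d_\D(S)}$.

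As a sanity check, consider the single-feasible-set case $\mathcal F = \{B\}$: then $y(\D)$ is supported at $p_B$, the fiber $\pi_n^{-1}(p_B) = \PP(L_B) \cong \PP^{n-1}$ sits in $\PP^{2n}$ as a linear subspace of codimension $n+1$, and a Koszul computation gives LHS $= u^{n+1}$. On the combinatorial side, $d_\D(S) = |S \triangle (B \cap [n])|$ yields $\operatorname{Int}_\D(v) = (1+v)^n$ and hence RHS $= u \cdot u^n = u^{n+1}$. By the valuativity of both sides in $\D$, verifying the identity on such base cases (more generally, on a suitable spanning set of the valuative group of delta-matroids) would suffice to establish the general case.

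The main obstacle is the HRR-type formula itself and the combinatorial extraction of the interlace polynomial from its output. Concretely, the sum of $T$-equivariant local contributions from the vertices $\be_B$ of $\widehat{P(\D)}$, weighted by the local cones and the spinor line bundle $\mathcal O(1)$, must collapse via alternating-sign cancellations — analogous to a Brianchon--Gram identity applied to the integer cube $\{0,1\}^n$ — to $\sum_{S \subseteq [n]} u \cdot (u-1)^{d_\D(S)}$. Tracking how the $\ell^1$ lattice distance $d_\D(S)$ emerges from the local cone data, a feature relying crucially on the restricted edge directions $\be_i$ and $\be_i \pm \be_j$ characterizing delta-matroid polytopes, is the heart of the argument.
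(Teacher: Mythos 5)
Your high-level skeleton — use valuativity of $y(\D)$ to reduce to a spanning family, then apply a Hirzebruch--Riemann--Roch-type formula — is the right shape, and your single-feasible-set sanity check is correct. But there is a genuine gap in the middle, and the place where you flag the "main obstacle" is exactly where the paper's argument has a different structure from what you propose.

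First, you propose to compute $\pi_{1*}\pi_n^*(y(\D)\cdot[\mathcal O(1)])$ directly by localizing at pairs of fixed points $(B,p)$ and then "repackage" with the HRR formula. This would not work as stated: the HRR formula of Theorem~\ref{thm:HRR} is an Euler-characteristic statement on $X_{B_n}$, not a tool that converts localization sums on $OGr(n;2n+1)\times\PP^{2n}$ into polytopal sums. The paper's actual reduction is two-fold. It first proves a transfer lemma (Proposition~\ref{prop:transfer}): the pushforward $\pi_{1*}\pi_n^*\epsilon$ is $R_\epsilon(u-1)$, where $R_\epsilon(v)=\sum_i\chi\bigl(OGr(n;2n+1),\,\epsilon\cdot[\bwedge^i\mathcal Q^\vee]\bigr)v^i$; this is a Koszul-complex argument on the projective bundle, and is what replaces your localization-over-$(B,p)$ step. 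Second, once on the realizable/very-ample spanning family, it uses the toric map $\varphi_L\colon X_{B_n}\to\overline{T\cdot\L}$ and the projection formula to rewrite each Euler characteristic as $\chi(X_{B_n},[P(\D)]\cdot[\bwedge^i\mathcal Q_\D^\vee])$. Only then does Theorem~\ref{thm:HRR} enter. Your outline skips both of these steps, and without them there is no class on $X_{B_n}$ to feed into $\psi$.

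Second, the "combinatorial extraction" you describe via Brianchon--Gram on the cube is not what happens. After the two reductions above, the paper writes the answer as $\tfrac{1}{2^n}\int_{X_{B_n}}\tfrac{1}{1-\gamma}\cdot\psi([P(\D)])\cdot\psi(\sum_i[\bwedge^i\mathcal Q_\D^\vee]v^i)$, identifies $\psi([P(\D)])=c(\mathcal I_\D^\vee)$ (Proposition~\ref{prop:compute}) and $\psi(\sum_i[\bwedge^i\mathcal Q_\D^\vee]v^i)=(v+1)^{n+1}c(\mathcal I_\D,\tfrac{v-1}{v+1})c(\mathcal I_\D)$ (Proposition~\ref{prop:niceChern}), uses $c(\mathcal I_\D)c(\mathcal I_\D^\vee)=1$, and then appeals to the already-known Chern-class formula for the interlace polynomial (\cite[Theorem 7.15]{EFLS}, Theorem~\ref{thm:intersection}). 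That last citation is doing all the combinatorial heavy lifting; you would need to reprove it from scratch if you wanted a direct polytopal cancellation argument, and it is not at all a Brianchon--Gram computation. Finally, a smaller issue: single-feasible-set delta-matroids do \emph{not} span the valuative group (their indicator functions are point masses, while e.g.\ $\one_{[0,1]^n}$ is a delta-matroid indicator not in their span). The correct spanning family is the Schubert delta-matroids, which matter precisely because they are realizable with $y(\D)=[\mathcal O_{\overline{T\cdot\L}}]$ — that is what makes the $\varphi_L$ transfer possible.
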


To prove the theorem, in Proposition~\ref{prop:transfer} we transport the pullback-pushforward ${\pi_1}_*\pi_n^*(-)$ computation to a sheaf Euler characteristic $\chi(-)$ computation on a smooth projective toric variety $X_{B_n}$ known as the \emph{type B permutohedral variety} (Definition~\ref{defn:Bn}).
Then, to carry out the sheaf Euler characteristic computation, we establish the following new Hirzebruch--Riemann--Roch-type formula for $X_{B_n}$.  Let $A^\bullet(X_{B_n})$ be the Chow ring of $X_{B_n}$, with the degree map $\int_{X_{B_n}} \colon A^n(X_{B_n}) \overset\sim\to \ZZ$.

\begin{maintheorem}\label{thm:HRR}
There is an injective ring homomorphism $\psi \colon K(X_{B_n}) \to A^{\bullet}(X_{B_n})$, which becomes an isomorphism after tensoring with $\mathbb{Z}[\frac{1}{2}]$. For any $[\mathcal E]\in K(X_{B_n})$, the map $\psi$ satisfies
\[
\chi(X_{B_n}, [\mathcal{E}]) = \frac{1}{2^n}\int_{X_{B_n}}  \psi([\mathcal{E}]) \cdot (1+\gamma + \gamma^2 + \cdots + \gamma^n)
\]
where $\gamma$ is the anti-canonical divisor of $X_{B_n}$.
\end{maintheorem}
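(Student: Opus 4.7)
The plan is to split the statement into the construction of $\psi$ and the verification of the Euler characteristic identity, with both parts handled via the toric/combinatorial structure of $X_{B_n}$ and $T$-equivariant localization.

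For the construction of $\psi$, I would exploit the presentations of $K(X_{B_n})$ and $A^\bullet(X_{B_n})$ as quotients of polynomial rings indexed by the rays of the $B_n$-Coxeter fan, defining $\psi$ on these generators (as a suitable normalization of the Chern character, or directly via $[\mathcal O(D)] \mapsto$ an explicit polynomial in divisor classes) and verifying compatibility with the Stanley--Reisner and linear relations. Injectivity together with the isomorphism-after-$\otimes\mathbb Z[\tfrac12]$ statement should be checked by comparing ranks --- both $K(X_{B_n})$ and $A^\bullet(X_{B_n})$ are free abelian of rank $|W(B_n)| = 2^n n!$, equal to the number of $T$-fixed points --- and computing the determinant of $\psi$, or equivalently by applying equivariant localization, which embeds both groups diagonally into products over these fixed points so injectivity and surjectivity up to a power of $2$ can be read off directly.

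For the Euler characteristic formula, I would begin with classical Hirzebruch--Riemann--Roch on the smooth projective toric variety $X_{B_n}$,
\[
\chi(X_{B_n}, [\mathcal{E}]) = \int_{X_{B_n}} \operatorname{ch}([\mathcal{E}]) \cdot \operatorname{Td}(X_{B_n}),
\]
and, using additivity in $[\mathcal{E}]$ and the fact that $\psi$ is a ring homomorphism, reduce the claim to a universal identity comparing $\operatorname{Td}(X_{B_n})$ with $\tfrac{1}{2^n}(1 + \gamma + \cdots + \gamma^n)$, up to terms killed by $\int$. The natural tool for this is the Atiyah--Bott--Berline--Vergne localization formula applied to the $T$-action on $X_{B_n}$: the fixed points are indexed by signed permutations $w \in W(B_n)$, and at each $w$ the $n$ tangent weights are the positive roots of the $B_n$-Weyl chamber associated to $w$. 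Both $\operatorname{Td}(X_{B_n})$ and the geometric series $1 + \gamma + \cdots + \gamma^n$ restrict to explicit weight products and sums at each fixed point, so the global identity reduces to a combinatorial identity summing over $W(B_n)$.

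The main obstacle is this last combinatorial matching: that summing the local Todd contributions $\prod_\alpha \alpha/(1 - e^{-\alpha})$ over the $2^n n!$ chambers of the $B_n$-Coxeter arrangement reproduces $\tfrac{1}{2^n}$ times the geometric series in $\gamma$. The factor $2^n$ and the simple geometric-series shape are strongly reminiscent of the spinor embedding of $OGr(n;2n+1) \hookrightarrow \mathbb P^{2^n-1}$, where $\mathcal{O}(1)$ squares to the Pl\"ucker bundle, and I would try to prove the identity either by induction on $n$ exploiting the recursive structure of the $B_n$-fan (projections $X_{B_n} \to X_{B_{n-1}}$ together with blow-up/blow-down descriptions) or by a direct type-$B$ symmetric-function manipulation analogous to those known for the type-$A$ permutohedral variety.
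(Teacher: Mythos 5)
Your proposal leaves the central object undefined: you never say what $\psi$ actually is. The paper's $\psi$ is \emph{not} a normalized Chern character --- it is defined via equivariant localization by the substitution
\[
(\psi_T(f))_w(t_1,\ldots,t_n) = f_w\!\left(\tfrac{1+t_1}{1-t_1},\ldots,\tfrac{1+t_n}{1-t_n}\right),
\]
and the well-definedness hinges on the specific fact that $p(z)=\tfrac{1+z}{1-z}$ satisfies $p(0)=1$ and $p(-z)=p(z)^{-1}$, which preserves the moment-graph congruences associated to edge labels $\pm\be_k$, $\pm(\be_k-\be_\ell)$, $\pm(\be_k+\be_\ell)$. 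The remark following \Cref{thm:HRR} in the paper explicitly states that $\psi$ is unrelated to the usual Chern character, so the ``normalized Chern character'' route is a dead end, and without pinning $\psi$ down your rank/localization argument for injectivity and isomorphism-after-inverting-$2$ has nothing to act on.

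The second and larger gap is in your reduction of the Euler characteristic identity. You propose to start from classical HRR, $\chi(X_{B_n},[\mathcal E]) = \int \mathrm{ch}([\mathcal E])\,\mathrm{Td}(X_{B_n})$, and then reduce the claim to ``a universal identity comparing $\mathrm{Td}(X_{B_n})$ with $\tfrac{1}{2^n}(1+\gamma+\cdots+\gamma^n)$.'' That reduction is only valid if $\psi=\mathrm{ch}$, which is false: the two HRR-type formulas have different multipliers on the $K$-class side, so the identity you would actually need compares $\mathrm{Td}$ against $\tfrac{1}{2^n}(1+\gamma+\cdots+\gamma^n)$ \emph{twisted by} $\mathrm{ch}\circ\psi^{-1}$ (or, dually, after transporting $\int(-)\cdot\mathrm{Td}$ through $\psi\circ\mathrm{ch}^{-1}$). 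As stated, $\mathrm{Td}(X_{B_n})\neq\tfrac{1}{2^n}(1+\gamma+\cdots+\gamma^n)$ even in degree zero ($1$ versus $2^{-n}$). A localization-over-fixed-points computation could in principle close this, but the $2^n$ comes out of the relation $\psi_T(1-T^{-\mathbf v}) = \tfrac{2\langle \mathbf v, t\rangle}{\text{(unit)}}$ applied to the $n$ tangent weights, not from any comparison of Todd with a geometric series, and you would still need to identify the surviving unit factors with the restrictions of $(1-\gamma)^{-1}$. You have not sketched that matching.

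The paper avoids all of this. After constructing $\psi$ and proving it is an isomorphism over $\mathbb Z[\tfrac12]$, it invokes the valuativity result \Cref{thm:schuberts} to reduce the Euler characteristic identity to the single family of classes $[P(\D)]$, on which both sides are directly computable: $\chi(X_{B_n},[P(\D)])$ counts feasible sets (lattice points of $P(\D)$); $\psi([P(\D)])=c(\mathcal I_\D^\vee)$ by \Cref{prop:compute}; and $\tfrac{1}{2^n}\int c(\mathcal I_\D^\vee)\cdot(1-\gamma)^{-1}$ also counts feasible sets by specializing \Cref{thm:intersection} (the EFLS interlace-polynomial integral formula) at $v=1$. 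This reduction via valuativity and the imported intersection-theory computation is what makes the proof short; your proposal would need to recreate that content from scratch inside the localization sum, and as written it has a wrong target identity.
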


The map $\psi$ in Theorem~\ref{thm:HRR} is unrelated to the usual Chern character. It also differs from the Hirzebruch--Riemann--Roch-type isomorphism of \cite[Theorem C]{EFLS}, which is not as suitable for proving Theorem~\ref{thm:FS}.

\begin{question}
The $g$-polynomial \cite{Speyerg} of a matroid is an invariant of matroids that can be (conjecturally) used to give strong bounds on the number of pieces in a matroid polytope subdivision. The coefficients of the $g$-polynomial are certain linear combinations of the coefficients that are used to express $y(\M)$ in terms of structure sheaves of Schubert varieties in $K(\operatorname{Gr}(r; n))$. In \cite[Theorem 6.1]{FinkSpeyer}, the authors express the $g$-polynomial in terms of a computation similar to the one in Theorem~\ref{thm:FS}. Is there an invariant of delta-matroids which gives strong bounds on the number of pieces in a delta-matroid polytope subdivision?
\end{question}

The paper is organized as follows.  In Section~\ref{sec:Kclass}, we discuss equivariant $K$-theory and define $y(\D)$. We also discuss a key tool, the theory of \emph{valuative} invariants of delta-matroids, which we repeatedly use to reduce statements to the case of realizable delta-matroids. 
In Section~\ref{sec:3}, we prove Theorem~\ref{thm:HRR} and discuss certain class in $K(X_{B_n})$ which will be used in the proof of Theorem~\ref{thm:FS}. In Section~\ref{sec:4}, we prove Theorem~\ref{thm:FS}. In Section~\ref{sec:egs}, we give some examples and questions. 

\subsection*{Acknowledgements}
We thank Alex Fink, Steven Noble, Kris Shaw, and David Speyer for helpful conversations. We thank the referee for their helpful comments. 
The first author is partially supported by the US National Science Foundation (DMS-2001854 and DMS-2246518).  The second author is supported by an NDSEG graduate fellowship.

\section{$K$-classes of delta-matroids}\label{sec:Kclass}

Throughout, we will use localization for the torus-equivariant $K$-theory of toric varieties and flag varieties, for which one can consult \cite[\S2.2]{FinkSpeyer}, \cite[\S2.2]{FlagTutte}, or \cite[\S8]{CDMS18} along with the references therein.  Let $T = (\kk^*)^n$ for $\kk$ an algebraically closed field, and denote by $K_T(X)$ the $T$-equivariant $K$-ring of vector bundles on a $T$-variety $X$.  Identifying the character lattice of $T$ with $\ZZ^n$, we write $K_T(\operatorname{pt}) = \ZZ[T_1^{\pm 1}, \dots, T_n^{\pm 1}]$ for the equivariant $K$-ring of a point $\operatorname{pt}$.  For $\mathbf m = (m_1, \ldots, m_n) \in \ZZ^n$, we write $T^{\mathbf m} = T_1^{m_1}\cdots T_n^{m_n}$.

For a countable-dimensional $T$-representation $V\simeq \bigoplus_i \kk \cdot v_i$, where $T$ acts on $v_i$ by $t\cdot v_i = t^{\mathbf{m}_i}v_i$, the \emph{Hilbert series} $\operatorname{Hilb}(V) = \sum_i T^{\mathbf{m}_i}$ is the sum of the characters of the action, which is often a rational function.  For an affine semigroup $S\subseteq \ZZ^n$, we write $\operatorname{Hilb}(S) =  \operatorname{Hilb}(\kk[S]) = \sum_{\mathbf m \in S} T^{-\mathbf m}$.  Note the minus sign, which arise because for $\chi^{\mathbf m} \in \kk[S]$, we have $t\cdot \chi^{\mathbf m} = t^{-\mathbf m}\chi^{\mathbf m}$.

\subsection{$K$-classes on the maximal orthogonal Grassmannian}

We begin by recalling some facts about the $T$-action on $\operatorname{OGr}(n;2n+1)$, whose verification is routine and is omitted. Recall that we have set $\be_{\bar{i}} = - \be_i$. 
\begin{itemize}
\item The $T$-fixed points $\operatorname{OGr}(n;2n+1)^T$ of $\operatorname{OGr}(n;2n+1)$ are in bijection with maximal admissible subsets, where such a subset $B \subset [n,\bar n]$ corresponds to the isotropic subspace
\[
L_B = \{x\in \kk^{2n+1} : x_0 = 0 \text{ and } x_j = 0 \text{ for all } j \in [n,\bar n]\setminus B\}.
\]
Polyhedrally, by identifying $B\subset [n,\bar n]$ with $\be_{B\cap [n]}\in \RR^n$, we may further identify the $T$-fixed points with the vertices of the unit cube $[0,1]^n \subset \RR^n$.
\item Each $T$-fixed point $L_B$ admits a $T$-invariant affine chart $U_B \simeq \mathbb{A}^{n(n+1)/2}$, on which $T$ acts with characters in the finite set
\[
\mathcal T_B = \{-\be_i : i\in B\} \cup \{-\be_i-\be_j : i\neq j \in B\}.
\]
In particular, for $\mathbf v \in \mathcal T_B$ with $B' \subset [n,\bar n]$ such that $\be_{B'} = \be_B + 2\mathbf v$, we have an 1-dimensional $T$-orbit in $\operatorname{OGr}(n;2n+1)$ whose boundary points are $L_B$ and $L_{B'}$.
All 1-dimensional $T$-orbits of $\operatorname{OGr}(n;2n+1))$ arise in this way.
\end{itemize}
Now, the localization theorem applied to $K_T(\operatorname{OGr}(n;2n+1))$ states the following:

\begin{thm}\label{thm:localizationK} \cite[Corollary 5.11]{VezzosiVistoli}
The restriction map
\[
K_T(\operatorname{OGr}(n;2n+1)) \to K_T(\operatorname{OGr}(n;2n+1)^T) = \prod_{L_B \in \operatorname{OGr}(n;2n+1)^T} \ZZ[T_1^{\pm 1}, \dots T_n^{\pm 1}]
\]
is injective, and its image is
\[
\left\{ (f_B)_B \in \prod_{L_B \in \operatorname{OGr}(n;2n+1)^T} \ZZ[T_1^{\pm 1}, \dots T_n^{\pm 1}] : \begin{matrix}\text{for $\mathbf v \in \mathcal T_B$ with $B' \subset [n,\bar n]$ such that $\be_{B'} = \be_B + 2\mathbf v$}\\ f_B - f_{B'} \equiv 0 \text{ mod } (1 - T^{\mathbf v}) \end{matrix}\right\}.
\]
\end{thm}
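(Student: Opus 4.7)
The plan is to verify that $OGr(n;2n+1)$ satisfies the hypotheses of Vezzosi--Vistoli's GKM-type theorem for equivariant $K$-theory, which then directly yields the description stated. Two ingredients are needed: (1) that $OGr(n;2n+1)$ is a smooth projective $T$-variety with finitely many $T$-fixed points and finitely many 1-dimensional $T$-orbits (making it a ``$T$-skeletal'' or GKM variety), and (2) that the weights appearing in the congruences correctly record the tangent $T$-representations along the 1-dimensional orbits. Both of these are immediate from the explicit description of the fixed points $L_B$ and the affine charts $U_B$ with tangent weights $\mathcal{T}_B$ that was recorded in the two bullets above.

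For injectivity, I would observe that $OGr(n;2n+1)$ admits a $T$-invariant Bialynicki--Birula cell decomposition (the orthogonal Schubert cells, coming from a generic 1-parameter subgroup of $T$). This makes $K_T(OGr(n;2n+1))$ a free $K_T(\operatorname{pt})$-module of rank $|OGr(n;2n+1)^T| = 2^n$, so restriction to the fixed locus is a map of free modules of the same rank. Injectivity can then be checked after inverting sufficiently many nontrivial characters, where it reduces to the classical Atiyah--Bott--Segal localization theorem.

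For the image description, the \emph{necessity} of each congruence is a purely local statement: fix a 1-dimensional $T$-orbit, let $Z \cong \mathbb P^1$ be its closure with fixed points $L_B, L_{B'}$, and let $\mathbf v \in \mathcal{T}_B$ be the tangent weight of $Z$ at $L_B$. Using $K_T(\mathbb P^1) \cong K_T(\operatorname{pt})[\xi]/\bigl((\xi-1)(\xi - T^{\mathbf v})\bigr)$, the images of the two fixed-point restrictions must agree modulo $1 - T^{\mathbf v}$, and a short check confirms that this $\mathbf v$ is exactly the one satisfying $\be_{B'} = \be_B + 2\mathbf v$. The \emph{sufficiency} direction is the substantive content of Vezzosi--Vistoli and would be the main obstacle if one were to reprove it from scratch: one shows that the congruence subring has the same rank $2^n$ as a free $K_T(\operatorname{pt})$-module, typically by an inductive argument on the Schubert stratification together with the five lemma applied to excision sequences in equivariant $K$-theory.

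The primary place where I would need to be careful is matching conventions: verifying that the weights $\{-\be_i : i \in B\} \cup \{-\be_i - \be_j : i \ne j \in B\}$ are indeed the $T$-characters on the tangent space at $L_B$ (which amounts to parametrizing $U_B$ as the strict upper-triangular block of an isotropic complement to $L_B^\perp/L_B$), and then confirming that these are precisely the primitive edge directions of the moment polytope $[0,1]^n$ and its (half-integer) diagonals at $\be_{B \cap [n]}$, so that the combinatorial rule $\be_{B'} = \be_B + 2\mathbf v$ accurately enumerates the 1-dimensional orbits needed to apply Vezzosi--Vistoli.
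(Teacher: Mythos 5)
Your proposal correctly identifies that the paper treats this as a direct application of Vezzosi--Vistoli's equivariant localization theorem: the paper offers no independent proof beyond the citation, implicitly relying on the fact (recorded in the two preceding bullets of the paper) that $OGr(n;2n+1)$ is a smooth projective GKM variety whose fixed points, $T$-invariant affine charts, tangent weights $\mathcal T_B$, and $1$-dimensional orbits are exactly as you describe. Your elaboration of the hypotheses (finiteness of fixed points and $1$-dimensional orbits, Bialynicki--Birula decomposition giving freeness, restriction to invariant $\mathbb P^1$'s giving the necessity of the congruences, and the rule $\be_{B'} = \be_B + 2\mathbf v$ matching the tangent data) is accurate and is precisely the verification the paper is silently invoking.
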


For an equivariant $K$-class $[\mathcal E]\in K_T(\operatorname{OGr}(n;2n+1))$ and a maximal admissible subset $B$, we write $[\mathcal E]_B \in \ZZ[T_1^{\pm 1}, \ldots, T_n^{\pm 1}]$ for the $B$-th factor of the image of $[\mathcal E]$ under the restriction map in \Cref{thm:localizationK}.

\medskip
For a matroid $\M$ on a ground set $[n]$, Fink and Speyer defined a $T$-equivariant $K$-class $y(\M)$ on a Grassmannian $\operatorname{Gr}(r;n)$.
We now define an analogous $T$-equivariant $K$-class $y(\D)$ for a delta-matroid $\D$.  For a feasible set $B$ of $\D$, denote by $\operatorname{cone}_B(\D)$ the tangent cone of $P(\D)$ at the vertex $\be_{B\cap [n]}$, i.e.,
\[
\operatorname{cone}_B(\D) = \RR_{\geq 0}\{P(\D) - \be_{B\cap [n]}\}.
\]
Since $\operatorname{cone}_B(\D)$ is a rational strongly convex cone whose set of primitive rays is a subset of $\mathcal T_B$, the multigraded Hilbert series
\[
\operatorname{Hilb}(\operatorname{cone}_B(\D)\cap \ZZ^n) = \sum_{\mathbf m\in \operatorname{cone}_B(\D)\cap \ZZ^n} T^{-\mathbf m}
\]
is a rational function whose denominator divides $\prod_{\mathbf v\in \mathcal T_B} (1-T^{-\mathbf v})$ \cite[Theorem 4.5.11]{Sta12}.

\begin{prop-defn}\label{prop:yDdef}
For a delta-matroid $\D$ on $[n,\bar n]$, define $y(\D) \in K_T(\operatorname{OGr}(n;2n+1)^T)$ by
\[
y(\D)_B = \begin{cases}
\operatorname{Hilb}(\operatorname{cone}_B(\D) \cap \ZZ^n) \cdot \prod_{\mathbf v\in \mathcal T_B} (1-T^{-\mathbf v}) & \text{if $B$ a feasible set of $\D$}\\
0 &\text{if otherwise}
\end{cases}
\]
for any maximal admissible subset $B \subset [n,\bar n]$.  Then $y(\D)$ lies in the subring $K_T(\operatorname{OGr}(n;2n+1))$. 
\end{prop-defn}

We omit the proof of the proposition, as it is essentially identical to the proof of the analogous statement \cite[Proposition 3.2]{FinkSpeyer} for matroids. Alternatively, it can be deduced from Theorem~\ref{thm:schuberts} and Proposition~\ref{prop:isVal}.
Let us note however the following difference from the matroid case.
For a matroid $\M$ on $[n]$, the class $y(\M)$ in \cite{FinkSpeyer} has the property that if $\L\in \operatorname{Gr}(r;n)$ realizes $\M$, then $y(\M)$ equals $[\mathcal O_{\overline {T\cdot \L}}]$, the $K$-class of the structure sheaf of the torus-orbit closure.
This property often fails for delta-matroids because delta-matroid base polytopes often do not enjoy certain polyhedral properties enjoyed by matroid base polytopes, namely normality and very ampleness.

Recall that a lattice polytope $P \subset \RR^n$ (with respect to the lattice $\ZZ^n$) is \emph{normal} if
for all positive integer $\ell$ one has $(\ell P) \cap \ZZ^n = \{\mathbf m_1 + \dots + \mathbf m_\ell : \mathbf m_i \in P \cap \ZZ^n \text{ for all } i =1, \dots, \ell\}$.  If $P$ is normal, then it is \emph{very ample}, meaning that for every vertex $\mathbf v$ of $P$, one has
\[
\big(\RR_{\geq 0}\{P - \mathbf v\}\big) \cap \ZZ^n = \ZZ_{\geq 0}\{(P - \mathbf v)\cap \ZZ^n\}.
\]

\begin{prop}\label{prop:OTL}
For a delta-matroid $\D$ realized by $\L\in \operatorname{OGr}(n;2n+1)$, the $T$-equivariant $K$-class $[\mathcal O_{\overline{T\cdot \L}}]$ of the structure sheaf of the torus-orbit-closure of $L$ satisfies
\[
[\mathcal O_{\overline{T\cdot \L}}]_B =
\begin{cases}
 \operatorname{Hilb}\big( \ZZ_{\geq 0}\{(P(\D) - \be_{B\cap [n]})\cap \ZZ^n\}\big)\prod_{\mathbf v \in \mathcal T_B} (1- T^{-\mathbf v}) & \text{if $B$ a feasible subset of $\D$}\\
 0 & \text{if otherwise}
 \end{cases}
\]
for any maximal admissible subset $B$.
In particular, the $T$-equivariant $K$-class $y(\D)$ equals $[\mathcal O_{\overline{T\cdot \L}}]$ if and only if $P(\D)$ is very ample.
\end{prop}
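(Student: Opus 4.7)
The plan is to apply the equivariant localization theorem (Theorem~\ref{thm:localizationK}) and compute $[\mathcal O_Z]$ locally at each fixed point $L_B$, where $Z := \overline{T\cdot\L}$. For the vanishing case, note that $L_B \in Z$ if and only if the $B$-th Pl\"ucker coordinate $p_B(L)$ is nonzero (i.e., $B$ is feasible for $\D$): one direction follows since the vanishing of $p_B$ on $L$ propagates $T$-equivariantly to $Z$, and the other from the moment polytope identification $\mu(Z) = \widehat{P(\D)}$ of \cite[Proposition~6.2]{EFLS}, which shows that $Z^T$ corresponds to the vertices of $\widehat{P(\D)}$. Hence for $B$ not feasible, $[\mathcal O_Z]_B = 0$, handling one case of the proposed formula.

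For $B$ feasible, I would work in the smooth affine chart $U_B \cong \mathbb{A}^{n(n+1)/2}$ of $OGr(n;2n+1)$, whose tangent characters at $L_B$ are $\mathcal T_B$. The key geometric step is to identify $Z \cap U_B$ as the affine toric subvariety $\operatorname{Spec}\kk[S_B]$, where $S_B := \ZZ_{\geq 0}\{(P(\D) - \be_{B\cap [n]}) \cap \ZZ^n\}$. For this I would pass to the spinor embedding: viewed via $\mathcal O(1)$, the orbit closure $Z$ is a projective toric variety with moment polytope a translate of $P(\D)$. A spinor coordinate $\sigma_{B'}$ restricts nontrivially to $Z$ exactly when $p_{B'}(L) = \sigma_{B'}(L)^2 \neq 0$, i.e.\ when $B'$ is feasible, and the ratios $\sigma_{B'}/\sigma_B$ have $T$-weights $\be_{B'\cap[n]} - \be_{B\cap[n]} \in \ZZ^n$ (even though the individual spinor weights are half-integral). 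Since $P(\D)$ is a $0$-$1$ polytope, its lattice points coincide with its vertices $\{\be_{B'\cap[n]} : B' \text{ feasible}\}$, so these ratios generate exactly the semigroup $S_B$. Granting this identification, a standard $T$-equivariant Koszul resolution in $K_T(U_B) \cong K_T(\operatorname{pt})$ yields
\[
[\mathcal O_{Z\cap U_B}] = \operatorname{Hilb}(\kk[Z\cap U_B]) \cdot \prod_{\mathbf v \in \mathcal T_B}(1 - T^{-\mathbf v}),
\]
the Laurent polynomial numerator of the rational Hilbert series over the common denominator $\prod_{\mathbf v \in \mathcal T_B}(1 - T^{-\mathbf v})$. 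Substituting $\kk[Z\cap U_B] \cong \kk[S_B]$ gives the claimed formula.

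The concluding equivalence is then immediate: the formula for $y(\D)_B$ in Proposition-Definition~\ref{prop:yDdef} has exactly the same shape as the one just derived for $[\mathcal O_Z]_B$, differing only in that $\operatorname{Hilb}(\operatorname{cone}_B(\D) \cap \ZZ^n)$ replaces $\operatorname{Hilb}(S_B)$. Hence $y(\D) = [\mathcal O_Z]$ if and only if $\operatorname{cone}_B(\D) \cap \ZZ^n = S_B$ at every feasible $B$, which is precisely the very ampleness of $P(\D)$ as recalled just above the proposition. The main obstacle I anticipate is the toric-geometric identification $\kk[Z\cap U_B] \cong \kk[S_B]$: the non-normality of $Z$ — the very reason $y(\D) \neq [\mathcal O_Z]$ in general — means that $S_B$ is typically a proper subsemigroup of its saturation $\operatorname{cone}_B(\D) \cap \ZZ^n$, so one must track the generators of $\kk[Z\cap U_B]$ carefully and work with the spinor rather than the Pl\"ucker embedding (whose $T$-weights would be doubled).
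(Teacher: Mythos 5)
Your proof is correct and follows essentially the same approach as the paper: both pass to the spinor embedding $\mathcal{O}(1)$, use that $P(\D)$ is a $0/1$ polytope so its lattice points are its vertices, and then invoke the standard $K$-theoretic Hilbert-series formula for the structure sheaf of an affine $T$-variety inside a smooth chart. The only cosmetic difference is that you work directly with the spinor coordinate ratios $\sigma_{B'}/\sigma_B$ to read off the semigroup $S_B$, whereas the paper first phrases the orbit closure under the Pl\"ucker ($\mathcal{O}(2)$) embedding as $Y_{\mathscr{A}(L)}$ and proves the intermediate identity $\mathscr{A}(L)=\{\mathbf m+\mathbf m'-(1,\dots,1)\}$ using projective normality before translating to the chart $U_B$; your version elides this intermediary step but lands on the same identification $\kk[Z\cap U_B]\cong\kk[S_B]$, and the concluding very-ampleness equivalence is read off identically.
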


\begin{proof}
For a finite subset $\mathscr A \subset \ZZ^n$, let $Y_{\mathscr A}$ be the projective toric variety defined as the closure of the image of the map $T\to \PP^{|\mathscr A|-1}$ given by $\mathbf t \mapsto (\mathbf t^{\mathbf m})_{\mathbf m\in \mathscr A}$. 
Writing $\be_0 = 0 \in \ZZ^n$, let us consider
\[
\mathscr A(L) = \left\{\be_S : \begin{matrix}\text{$S \subset [n,\bar n]\cup \{0\}$ with $|S| = n$ such that}\\ \text{the $S$-th Pl\"ucker coordinate of $L$ is nonzero}\end{matrix}\right\}.
\]
There is an embedding of $\mathbb{P}^{|\mathscr A| - 1}$ into $\PP(\bigwedge^n \kk^{2n+1})$ which identifies  the orbit closure $\overline{T\cdot \L} \subset \PP(\bigwedge^n \kk^{2n+1})$ with $Y_{\mathscr A(L)}$.
We now claim that
\[
\mathscr A(L) = \{\mathbf m + \mathbf m' - (1, \ldots, 1) : \mathbf m, \mathbf m' \in P(\D) \cap \ZZ^n\} \subset \widehat{P(\D)}.\]
That is, up to translation by $-(1, \ldots, 1)$, the set $\mathscr A(L)$ is the set of all sums of two (not necessarily distinct) lattice points in $P(\D)$.
When $B$ is a feasible set of $\D$, in the $T$-invariant affine chart $U_B$ around $L_B$, the coordinate ring $\mathcal O_{\overline{T\cdot \L}}(U_B)$ equals the semigroup algebra $
\kk[\ZZ_{\geq 0}\{ \mathbf m - \be_B : \mathbf m \in \mathscr A(L)\}]$, which the claim implies equals $\kk[\ZZ_{\geq 0}\{(P(\D) - \be_{B\cap [n]})\cap \ZZ^n\}]$, and thus the proposition follows from \cite[Theorem 8.34]{MS05} (see also \cite[Theorem 2.6]{FS10}).

For the claim, we first note that $\mathscr A(L)$ is contained in $\widehat{P(\D)}\cap \ZZ^n$ and contains all vertices of $\widehat{P(\D)}$ because the moment polytope $\mu(\overline{T\cdot \L})$ equals $\widehat{P(\D)}$ by \cite[Proposition 6.2]{EFLS}.
The Pl\"ucker embedding $\operatorname{OGr}(n;2n+1) \hookrightarrow \PP(\bigwedge^n \kk^{2n+1})$ is given by the square $\mathcal O(2)$ of the very ample generator $\mathcal O(1)$ of the Picard group of $\operatorname{OGr}(n;2n+1)$. 
Because homogeneous spaces are projectively normal, we find that $\overline{T\cdot \L}$ is isomorphic to $Y_{\mathscr A}$ for some subset $\mathscr A \subseteq P(\D)\cap \ZZ^n$ that includes all vertices of $P(\D)$.  But all lattices points of $P(\D)$ are its vertices, so $\mathscr A = P(\D) \cap \ZZ^n$.  Therefore, the projective embedding of $\overline{T\cdot \L}$ given by $\mathcal O(2)$ is isomorphic to $Y_{2\mathscr A}$ where $2\mathscr A = \{\mathbf m + \mathbf m' : \mathbf m, \mathbf m' \in \mathscr A\}$, which after translating each element by $-(1,\ldots, 1)$ is exactly $\mathscr A(L)$.
\end{proof}

The polytope $P(\D)$ can fail to be very ample in various degrees.  See \Cref{sec:egs} for a series of examples.  In particular, the class $y(\D)$ may not equal $[\mathcal O_{\overline{T\cdot \L}}]$ when $L$ realizes $\D$.

\begin{rem}
\Cref{prop:OTL} also implies that the class $[\mathcal O_{\overline{T\cdot \L}}]$ depends only on the delta-matroid $\D$, independently of the realization $L$ of $\D$.
The analogous statement fails when delta-matroids are considered as ``type C Coxeter matroids,'' a.k.a.\ symplectic matroids.  More precisely, in \cite{BGW98}, realizations of delta-matroids are points on the Lagrangian Grassmannian $\operatorname{LGr}(n;2n)$ consisting of maximal isotropic subspaces with respect to the standard symplectic form on $\kk^{2n}$.  However, in this case, the $K$-class of the torus-orbit-closure of a point $\L\in \operatorname{LGr}(n;2n)$ may not be determined by the delta-matroid that $L$ realizes. See the following example. This is related to the fact that the parabolic subgroup corresponding to $\operatorname{OGr}(n; 2n+1)$ is \emph{minuscule}, but the parabolic subgroup corresponding to $\operatorname{LGr}(n; 2n)$ is not \cite[Section 2.11]{MR1782635}. 
\end{rem}

\begin{eg}\label{ex:typeC}
Let $\mathbb{C}^4$ (with coordinates labeled by $(1, 2, \bar{1}, \bar{2}$) be equipped with the standard symplectic form.
The torus $T = (\mathbb{C}^*)^2$ acts on $\mathbb{C}^4$ by $(t_1, t_2) \cdot (x_1, x_2, x_{\bar{1}}, x_{\bar{2}}) = (t_1 x_1, t_2x_2, t_1^{-1} x_{\bar{1}}, t_2^{-1}x_{\bar{2}})$.
For each $z \in \mathbb{C}$, consider the $2$-dimensional subspace $L_z$ spanned by $(1, 0, 1, z)$ and $(0, 1, z, 1)$, which is isotropic.
For all $z \not= \pm 1$, every Pl\"{u}cker coordinate corresponding to a maximal admissible subset is nonzero.
Thus, the moment polytope $\mu(\overline{T\cdot [L_z]})$ always equals $[-1,1]^2 \subset \RR^2$ as long as $z \neq \pm 1$.
However, when $z = 0$, one computes that $\overline{T\cdot [L_z]} \simeq \PP^1\times \PP^1$, whereas $\overline{T\cdot [L_z]}$ is a toric surface with four conical singularities when $z \neq \pm1$ and $z\neq 0$.
As a result,  one verifies that the $[\mathcal O_{\overline{T\cdot [L_0]}}] \neq [\mathcal O_{\overline{T\cdot [L_3]}}]$, even as non-equivariant $K$-classes.
\end{eg}

\subsection{$K$-classes on the type B permutohedral variety}\label{ssec:KtypeB}

We explain how the geometry of the type B permutohedral variety $X_{B_n}$ relates to the class $y(\D)$ on $\operatorname{OGr}(n;2n+1)$, which we will use to prove \Cref{thm:FS}.
We begin by briefly reviewing the relation between delta-matroids and $X_{B_n}$, details of which can be found in \cite[Section 2]{EFLS}.

\begin{defn}\label{defn:Bn}
Let $W$ be the \emph{signed permutation group} on $[n,\bar n]$, which is the subgroup of the permutation group $\mathfrak S_{[n,\bar n]}$ defined as
\[
W = \{w\in \mathfrak S_{[n,\bar n]} : w(\overline i) = \overline{w(i)} \text{ for all $i\in [n]$}\}.
\]
The \emph{${B_n}$ permutohedral fan} $\Sigma_{B_n}$ is the complete fan in $\RR^n$, unimodular with respect to the lattice $\ZZ^n$, whose maximal cones are labeled by elements of $W$, with the maximal cone $\sigma_w$ being
\[
\RR_{\geq 0}\{\be_{w(1)}, \be_{w(1)}+\be_{w(2)}, \dots, \be_{w(1)}+ \be_{w(2)}+\dots + \be_{w(n)}\} \quad\text{for each $w\in W$}.
\]
\end{defn}

Let $X_{B_n}$ be the (smooth projective) toric variety of the fan $\Sigma_{B_n}$, which contains $T$ as its open dense torus.  For each $w\in W$, let $\operatorname{pt}_w$ be the $T$-fixed point of $X_{B_n}$ corresponding to the maximal cone $\sigma_w$.
We follow \cite{Ful93, CLS} for toric variety conventions.

\medskip
The normal fan of a delta-matroid polytope $P(\D)$ is always a coarsening of $\Sigma_{B_n}$\cite[Section 4.4]{ACEP20}.  Hence, under the standard correspondence between nef toric line bundles and polytopes, the polytope $P(\D)$ defines a line bundle whose $K$-class we denote $[P(\D)] \in K(X_{B_n})$.  See \cite[Chapter 6]{CLS} and \cite[Section 2.2]{EFLS} for details.
The assignment $\D\mapsto [P(\D)]$ is \emph{valuative} in the following sense.

\begin{defn}
For a subset $S\subset \RR^n$, let $\one_S \colon \RR^n \to \ZZ$ be defined by $\one_S(x) = 1$ if $x\in S$ and $\one_S(x) = 0$ if otherwise.  Define the \emph{valuative group} of delta-matroids on $[n,\bar n]$ to be
\[
\mathbb I(\mathsf{DMat}_n) = \text{the subgroup of $\ZZ^{(\RR^n)}$ generated by $\{\one_{P(\D)} : \D \text{ a delta-matroid on $[n,\bar n]$}\}$}.
\]
A function $f$ on delta-matroids valued in an abelian group is \emph{valuative} if it factors through $\mathbb I(\mathsf{DMat}_n)$.
\end{defn}

We record the following useful consequence of \cite[Theorem D]{EFLS}.

\begin{thm}\label{thm:schuberts}
Let $\mathscr D = \{\D \text{ a delta-matroid on $[n,\bar n]$}: \text{$\D$ has a realization $L$ with $[\mathcal O_{\overline{T\cdot \L}}] = y(\D)$}\}$.
Then, the delta-matroids in $\mathscr D$ generate both the $K$-ring $K(X_{B_n})$, considered as an abelian group, and the valuative group $\mathbb I(\mathsf{DMat}_n)$.  That is, the set $\{[P(\D)]: \D\in \mathscr D\}$ generates $K(X_{B_n})$, and the set $\{1_{P(\D)} : \D \in \mathscr D\}$ generates $\mathbb I(\mathsf{DMat}_n)$.
\end{thm}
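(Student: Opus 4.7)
The theorem is labelled as a consequence of \cite[Theorem D]{EFLS}, so the plan is to unpack that result and perform a short reduction. Theorem D of \cite{EFLS} should provide an explicit family $\mathscr D_0$ of ``Schubert''-type delta-matroids whose base-polytope line bundles generate $K(X_{B_n})$ as an abelian group and whose indicator functions generate the valuative group $\mathbb I(\mathsf{DMat}_n)$; in analogy with the matroid picture, these are delta-matroids realized by sufficiently generic points of Schubert cells in $OGr(n;2n+1)$. Granting this, the task is to verify the inclusion $\mathscr D_0 \subseteq \mathscr D$.

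For this containment, fix $\D \in \mathscr D_0$ together with a realization $\L \in OGr(n;2n+1)$ living generically in the corresponding Schubert cell. By \Cref{prop:OTL}, the equality $[\mathcal O_{\overline{T\cdot \L}}] = y(\D)$ is equivalent to very ampleness of $P(\D)$. I would deduce very ampleness from the fact that $OGr(n;2n+1)$ is a minuscule homogeneous space, so all Schubert varieties in it are projectively normal under the Spinor embedding; restricting along the $T$-equivariant embedding $\overline{T\cdot \L} \hookrightarrow \PP(\bwedge^n \kk^{2n+1})$ given by $\mathcal O(2)$ transfers projective normality to the torus-orbit closure, which forces $P(\D)$ to be normal and hence very ample.

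Once $\mathscr D_0 \subseteq \mathscr D$ is established, both conclusions of the theorem are immediate: the subgroup of $K(X_{B_n})$ generated by $\{[P(\D)] : \D \in \mathscr D\}$ and the subgroup of $\mathbb I(\mathsf{DMat}_n)$ generated by $\{\one_{P(\D)} : \D \in \mathscr D\}$ each contain the full generating set coming from $\mathscr D_0$. The main obstacle is matching conventions with \cite[Theorem D]{EFLS}: one must ensure that the family of delta-matroids it produces genuinely admits realizations as generic points in Schubert cells, so that the minuscule projective-normality argument applies. If \cite[Theorem D]{EFLS} phrases its generators purely combinatorially (e.g., via a Schubert-matroid-polytope recipe without reference to a realization), then a direct polytope-theoretic normality check for those specific polytopes would be needed in place of the homogeneous-space argument.
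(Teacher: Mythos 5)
Your high-level plan matches the paper's: reduce to the family of Schubert delta-matroids, show they lie in $\mathscr D$ via \Cref{prop:OTL}, and then cite \cite{EFLS} for the two generation statements. However, your main argument for the containment $\mathscr D_0 \subseteq \mathscr D$ does not work. Projective normality of a Schubert variety (or of the ambient $OGr(n;2n+1)$) is \emph{not} inherited by a subvariety, so it gives no control over the normality of the torus-orbit closure $\overline{T\cdot\L}$ sitting inside it. This is not a hypothetical worry: the paper exhibits in \Cref{sec:egs} realizable delta-matroids whose orbit closures in $OGr(n;2n+1)$ are non-normal, i.e.\ $P(\D)$ is not even very ample with respect to the lattice affinely generated by its vertices. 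Also note that the torus has rank $n$, so orbit closures have dimension at most $n$, while Schubert varieties can have dimension up to $n(n+1)/2$; for $L$ generic in a large Schubert cell, $\overline{T\cdot\L}$ is a proper subvariety of that Schubert variety, so you cannot identify the orbit closure with the Schubert variety to import its geometry. In the paper, projective normality of $OGr(n;2n+1)$ is used only (in the proof of \Cref{prop:OTL}) to identify which lattice points of $\widehat{P(\D)}$ index the embedding $Y_{\mathscr A(L)}$, not to conclude normality of $Y_{\mathscr A(L)}$.

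The ``direct polytope-theoretic normality check'' you float at the end is exactly what the paper does, and it is short. Schubert delta-matroids are a concrete combinatorial family \cite[Definition 2.6]{EFLS}; they are realizable by \cite[Example 6.3]{EFLS}; and their base polytopes are normal (hence very ample) because each is isomorphic to a polymatroid polytope, whose normality is classical \cite[Chapter 18.6, Theorem 3]{Wel76}. Then \Cref{prop:OTL} places all Schubert delta-matroids in $\mathscr D$. One more discrepancy: the paper cites \cite[Theorem D]{EFLS} for generation of $K(X_{B_n})$ and, separately, \cite[Proposition 2.7]{EFLS} for generation of $\mathbb I(\mathsf{DMat}_n)$; you should not assume Theorem D alone delivers both.
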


\begin{proof}
We first note that the set $\mathscr D$ includes the family of delta-matroids known as \emph{Schubert delta-matroids} \cite[Definition 2.6]{EFLS}.  Indeed, Schubert delta-matroids are realizable \cite[Example 6.3]{EFLS}, and their base polytopes, being isomorphic to a polymatroid polytope, are normal \cite[Chapter 18.6, Theorem 3]{Wel76}.
Hence, by \Cref{prop:OTL}, the set $\mathscr D$ includes all Schubert delta-matroids.
Now, Schubert delta-matroids generate both $K(X_{B_n})$ \cite[Theorem D]{EFLS} and $\mathbb I(\mathsf{DMat}_n)$ \cite[Proposition 2.7]{EFLS}.
\end{proof}

Lastly, the $K$-class $y(\D)$ relates to the geometry of $X_{B_n}$ in the following way.  When $\D$ has a realization $\L\in \operatorname{OGr}(n;2n+1)$, there exists a unique $T$-equivariant map $\varphi_L \colon X_{B_n} \to \operatorname{OGr}(n;2n+1)$ such that the identity point of the torus $T\subset X_{B_n}$ is mapped to $\L$ \cite[Proposition 7.2]{EFLS}.  Note that its image is the torus-orbit-closure $\overline{T\cdot \L}$.

\begin{prop}\label{prop:isVal}
The assignment $\D \mapsto y(\D)$ is the unique valuative map such that $y(\D) = {\varphi_L}_*[\mathcal O_{X_{B_n}}]$ whenever $\D$ has a realization $L$.
\end{prop}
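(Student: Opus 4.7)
The plan is to verify three properties: valuativity of $y$, the equality $y(\D) = {\varphi_L}_*[\mathcal O_{X_{B_n}}]$ on realizable delta-matroids, and uniqueness. \emph{Uniqueness} I would handle first via \Cref{thm:schuberts}: the indicator functions $\{\one_{P(\D)} : \D \in \mathscr D\}$ generate $\mathbb I(\mathsf{DMat}_n)$, and every $\D\in\mathscr D$ is realizable, so any valuative map agreeing with ${\varphi_L}_*[\mathcal O_{X_{B_n}}]$ on realizable delta-matroids is already determined on $\mathscr D$, and hence on all of $\mathsf{DMat}_n$.

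For the \emph{compatibility on realizations}, let $\D$ be realized by $L$, and compare the two classes at each $T$-fixed point $L_B$ using the injective localization map of \Cref{thm:localizationK}. If $B$ is not feasible, both sides vanish at $L_B$ since $L_B \notin \overline{T\cdot L}$. If $B$ is feasible, I would factor $\varphi_L$ as
\[
X_{B_n} \xrightarrow{\pi} X_{P(\D)} \xrightarrow{\nu} \overline{T\cdot L} \hookrightarrow OGr(n;2n+1),
\]
where $\pi$ is the toric morphism induced by the refinement $\Sigma_{B_n}$ of the normal fan of $P(\D)$, and $\nu$ is the normalization of $\overline{T\cdot L}$. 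Standard toric vanishing for the proper birational toric morphism $\pi$ gives $R\pi_*\mathcal O_{X_{B_n}} = \mathcal O_{X_{P(\D)}}$, so ${\varphi_L}_*[\mathcal O_{X_{B_n}}]$ equals the pushforward of $[\mathcal O_{X_{P(\D)}}]$ to $OGr(n;2n+1)$. Its stalk at $L_B$ is the semigroup algebra $\kk[\operatorname{cone}_B(\D) \cap \ZZ^n]$ of the affine chart of $X_{P(\D)}$ at the vertex $\be_{B\cap[n]}$. Applying the standard equivariant-localization identity $[\mathcal F]_p = \operatorname{Hilb}(\mathcal F_p) \cdot \prod_{\mathbf v}(1-T^{-\mathbf v})$ for coherent $\mathcal F$ at a smooth $T$-fixed point $p$ (with $\mathbf v$ running over the tangent characters) then reproduces exactly the formula for $y(\D)_B$ from Proposition-Definition~\ref{prop:yDdef}.

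For \emph{valuativity}, by injectivity of localization it suffices to show that $\D \mapsto y(\D)_B$ is valuative for each $B$. Factoring out the $\D$-independent term $\prod_{\mathbf v\in\mathcal T_B}(1-T^{-\mathbf v})$ reduces this to the statement that $P\mapsto \operatorname{Hilb}(\operatorname{cone}_v(P)\cap \ZZ^n)$ is a polytope valuation for each fixed $v\in\ZZ^n$ (with the convention that cones containing a line have Hilbert series zero), which is a classical theorem of Lawrence and Khovanskii--Pukhlikov. The step I expect to require the most care is the compatibility one: one must correctly identify ${\varphi_L}_*\mathcal O_{X_{B_n}}$ via the normalization $X_{P(\D)}$ rather than $\overline{T\cdot L}$ itself, since the latter need not be normal when $P(\D)$ is not very ample — this is exactly why $y(\D)$ and $[\mathcal O_{\overline{T\cdot L}}]$ differ in general, while $y(\D)$ nevertheless agrees with ${\varphi_L}_*[\mathcal O_{X_{B_n}}]$.
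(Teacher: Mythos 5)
Your overall architecture — uniqueness via \Cref{thm:schuberts}, compatibility via equivariant localization, valuativity via the Lawrence/Khovanskii--Pukhlikov theorem — matches the paper's, but the route you take through the compatibility step is genuinely different from the paper's. Where you factor $\varphi_L$ through $X_{P(\D)}$ and compute a single affine chart, the paper stays on $X_{B_n}$ and computes the pushforward via the Atiyah--Bott formula together with a generalized Brion's formula (the point being that the cones $\{\sigma_w : \operatorname{pt}_w\in\varphi_L^{-1}(L_B)\}$ subdivide the dual cone of $\operatorname{cone}_B(\D)$, so their inverted Euler classes sum to $\operatorname{Hilb}(\operatorname{cone}_B(\D)\cap\ZZ^n)$). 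Your route has the advantage of avoiding the Brion-type summation, replacing it with the projective normality of $X_{P(\D)}$.

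There is, however, a real error lurking in your factorization: the map $\nu \colon X_{P(\D)} \to \overline{T\cdot\L}$ is \emph{not} the normalization of $\overline{T\cdot\L}$ when the stabilizer $\operatorname{Stab}(L) \subset T$ is nontrivial. In that case (e.g.\ \Cref{eg1}, where $\operatorname{Stab}(L)=\{\pm(1,1,1)\}$) the effective torus acting on $\overline{T\cdot\L}$ is $T/\operatorname{Stab}(L)$ with character lattice $M'\subsetneq\ZZ^n$, and the normalization $N$ is the toric variety for the normal fan of $P(\D)$ with respect to the \emph{coarser} lattice dual to $M'$, so its affine chart at the vertex $\be_{B\cap[n]}$ is $\operatorname{Spec}\kk[\operatorname{cone}_B(\D)\cap M']$, not $\operatorname{Spec}\kk[\operatorname{cone}_B(\D)\cap\ZZ^n]$. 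If you really used the normalization, the resulting Hilbert series would fail to equal $y(\D)_B$. Your stalk computation is in fact correct because you silently work with $X_{P(\D)}$ over $\ZZ^n$, which is the variety you want, but then $\nu$ is a finite surjective map of degree $|\operatorname{Stab}(L)|$ that merely \emph{factors through} the normalization rather than being the normalization. Consequently the existence of the factorization $X_{B_n}\to X_{P(\D)}\to\overline{T\cdot\L}$ cannot be obtained for free from the universal property of normalization; you need a separate argument — e.g.\ the same uniqueness-of-$T$-equivariant-maps argument underlying the construction of $\varphi_L$ in \cite[Proposition 7.2]{EFLS} applied with $\Sigma_{P(\D)}$ in place of $\Sigma_{B_n}$, followed by the observation that both $\varphi_L$ and the composition send the identity of $T$ to $\L$. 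Once that gap is filled, the rest of your argument (in particular $R\pi_*\mathcal O_{X_{B_n}}=\mathcal O_{X_{P(\D)}}$ and the localization identity $[\mathcal F]_B = \operatorname{Hilb}(\Gamma(U_B,\mathcal F))\cdot\prod_{\mathbf v\in\mathcal T_B}(1-T^{-\mathbf v})$) goes through, using that $\nu$ is finite and hence $R^{>0}\nu_*=0$.
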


\begin{proof}
The assignment $\D \mapsto y(\D)$ is valuative because taking the Hilbert series of the tangent cone at a chosen point is valuative.  When $\D$ has a realization $L$ and $P(\D)$ is very ample, the map $\varphi_L$, considered as a map $X_{B_n} \to \overline{T\cdot \L}$ of toric varieties, is induced by a map of tori with a connected kernel.  Hence, in this case we have ${\varphi_L}_*[\mathcal O_{X_n}] = [\mathcal O_{\overline{T\cdot \L}}]$ by \cite[Theorem 9.2.5]{CLS} and $ [\mathcal O_{\overline{T\cdot \L}}] = y(\D)$ by \Cref{prop:OTL}.
The uniqueness then follows from \Cref{thm:schuberts}.

To see that $y(\D) = {\varphi_L}_*[\mathcal O_{X_{B_n}}]$ whenever $\D$ has a realization $L$, even if $P(\D)$ is not very ample, we compute the pushforward using the Atiyah--Bott formula.
First, for a maximal admissible $B\subset [n,\bar n]$, the construction of the map $\varphi_L$ shows that the fiber $\varphi_L^{-1}(L_B)$ is
\[
\varphi_L^{-1}(L_B) = \begin{cases}
\left\{\operatorname{pt}_w \in X_{B_n}^T: \begin{matrix} w\in W \text{ such that the dual cone of}\\ \text{$\RR_{\geq 0}\{P(\D) - \be_{B\cap [n]}\}$ contains $\sigma_w$}\end{matrix}\right\} &\text{if $B$ a feasible set of $\D$}\\
\emptyset & \text{if otherwise}.\\
\end{cases}
\]
We note that, because the normal fan of $P(\D)$ is a coarsening of $\Sigma_{B_n}$, for $B$ a feasible set of $\D$, the cones $\{\sigma_w : \operatorname{pt}_w\in \varphi_L^{-1}(L_B)\}$ form a polyhedral subdivision of the dual cone of $\RR_{\geq 0}\{P(\D) - \be_{B\cap [n]}\}$.  Now, the desired result follows from combining \cite[Theorem 5.11.7]{CG10} and the generalized Brion's formula \cite[Theorem 2.3]{Ish90}, \cite{Bri88}.
\end{proof}

\begin{rem}\label{rem:OTL}
One could have defined a $K$-class on $\operatorname{OGr}(n;2n+1)$ for an arbitrary delta-matroid $\D$ via the formula in \Cref{prop:OTL} instead of \Cref{prop:yDdef}.
Abusing notation, denote this alternate $K$-class by $[\mathcal O_{\overline{T\cdot \D}}]$, even though $\D$ may not be realizable.  
\Cref{prop:OTL} states that $y(\D) = [\mathcal O_{\overline{T\cdot \D}}]$ exactly when $P(\D)$ is very ample (with respect to $\ZZ^n$).
Unlike $\D \mapsto y(\D)$, the assignment $\D \mapsto [\mathcal O_{\overline{T\cdot \D}}]$ enjoys the feature that $[\mathcal O_{\overline{T\cdot \D}}] = [\mathcal O_{\overline{T\cdot \L}}]$ whenever $\D$ has a realization $L$, but it is not valuative by \Cref{prop:isVal}.
Moreover, \Cref{thm:FS} fails when $[\mathcal O_{\overline{T\cdot \D}}]$ is used in place of $y(\D)$, and we do not know a description of $\pi_{1*}\pi_n^* \big([\mathcal O_{\overline{T\cdot \D}}]\cdot [\mathcal{O}(1)] \big)$ in terms of known delta-matroid invariants.
See \Cref{sec:egs} for examples and questions about $[\mathcal O_{\overline{T\cdot \D}}]$.
\end{rem}

\section{The exceptional Hirzebruch--Riemann--Roch formula}\label{sec:3}

In this section, we prove Theorem~\ref{thm:HRR}.
We first construct $\psi$ and prove that it is an isomorphism after inverting 2.
Then, we discuss how $\psi$ relates to the \emph{isotropic tautological classes} of delta-matroids constructed in \cite{EFLS}, which we use to finish the proof of \Cref{thm:HRR}.

\subsection{The isomorphism}

We follow the notation and conventions in \cite[Sections 2.1 and 3.1]{EFLS}, recalling what is necessary. For a variety with a $T$-action, we will denote the Chow ring and equivariant Chow ring by $A^\bullet(X)$ and $A_T^\bullet(X)$ respectively. We use the language of moment graphs; see \cite[Section 2.4]{FS10} or \cite[Lecture 2]{MacPherson}.

We first define the moment graph $\Gamma$ associated to the $T$-action on $X_{B_n}$. The vertex set $V(\Gamma)$ is the signed permutation group $W$, which indexes the torus-fixed points of $X_{B_n}$, and the edges $E(\Gamma)$ are given by $(w,w\tau)$ for a transposition $\tau\in \{(1,2),(2,3),\ldots,(n-1,n),(n, \bar{n})\}$, indexing $T$-invariant $\mathbb{P}^1$'s joining torus-fixed points of $X_{B_n}$. Denote $\tau_{i,i+1}:=(i,i+1)$ and $\tau_n:=(n,\bar{n})$. We have edge labels $c(w,w\tau)$ which are characters of $T$ up to sign (i.e., elements of $\mathbb{Z}^n/\pm 1$) by taking $c(w,w\tau_n)=\pm \be_{w(n)}\in \mathbb{Z}^n/\pm 1$ and $c(w,w\tau_{i,i+1})=\pm (\be_{w(i)}-\be_{w(i+1)})\in \mathbb{Z}^n/\pm 1$, recalling the convention that $\be_{\overline{i}}=-\be_i$. For an edge label $c(ij)$, write $c(ij)_k$ for the $k$-component. 

By the identification of the character lattice of $T$ with $\mathbb{Z}^n$, we write $K_T(\operatorname{pt})=\mathbb{Z}[T_1^{\pm 1},\ldots,T_n^{\pm 1}]$ and $A_T^\bullet(\operatorname{pt})=\mathbb{Z}[t_1,\ldots,t_n].$
By equivariant localization we have
$$K_T(X_{B_n})=\{(f_v)_{v\in V(\Gamma)}:f_i-f_j\equiv 0\pmod {1-\prod_{k=1}^nT_k^{c(ij)_k}}\text{ for all }(i, j)\in E(\Gamma)\}\subset \bigoplus_{v\in \Gamma} K_T(\operatorname{pt}),$$
$$A_T^\bullet(X_{B_n})=\{(f_v)_{v\in V(\Gamma)}:f_i-f_j\equiv 0 \pmod{\sum_{k=1}^nc(ij)_k\cdot t_k} \text{ for all }(i, j)\in E(\Gamma)\}\subset \bigoplus_{v\in \Gamma} A^{\bullet}_T(\operatorname{pt}).$$
Note that both compatibility conditions are invariant under $c(ij)\mapsto -c(ij)$. These are algebras over the rings $\mathbb{Z}[T_1^{\pm 1},\ldots,T_n^{\pm 1}]$ and $\mathbb{Z}[t_1,\ldots,t_n]$ respectively, which are identified as subrings of $K_T(X_{B_n})$ and $A^{\bullet}_T(X_{B_n})$ via the constant collections of $(f_v)_{v\in V}$. Additionally, we have that
$$K(X_{B_n})=K_T(X_{B_n})/(T_1-1,\ldots,T_n-1)\text{ and }A^\bullet(X_{B_n})=A^\bullet_T(X_{B_n})/(t_1,\ldots,t_n).$$
Finally, there are $W$-actions on $K_T(X_{B_n})$ by $(w\cdot f)_{w'}(T_1,\ldots,T_n)=f_{w^{-1}w'}(T_{w(1)},\ldots,T_{w(n)})$, and on $A_T(X_{B_n})$ by $(w\cdot f)_{w'}(t_1,\ldots,t_n)=f_{w^{-1}w'}(t_{w(1)},\ldots,t_{w(n)})$, where we set
$$T_{\overline{i}}=T_i^{-1}\text{ and }t_{\overline{i}}=-t_i.$$
This action descends to the usual action of $W \subset \operatorname{Aut} X_{B_n}$ on $K(X_{B_n})$ and $A^{\bullet}(X_{B_n})$.

\begin{thm}
    There is an injective ring map
    $$\psi_T \colon K_T(X_{B_n})\to A^\bullet_T(X_{B_n})[1/(1\pm t_i)]:=A_T^\bullet(X_{B_n})[\{ \textstyle\frac{1}{1-t_i},\frac{1}{1+t_i}\}_{1\le i \le n}]$$ obtained by
    \begin{equation}\label{eq:maponfixed}
           (\psi_T(f))_w(t_1,\ldots,t_n)=f_w\left(\frac{1+t_1}{1-t_1},\ldots, \frac{1+t_n}{1-t_n}\right).
    \end{equation}
    This map descends to a non-equivariant map $\psi \colon K(X_{B_n})\to A^\bullet(X_{B_n})$, which is injective and becomes an isomorphism after tensoring with $\ZZ[\frac{1}{2}]$.
    
    Finally, $\psi_T$ and $\psi$ are $W$-equivariant in the sense that they intertwine the $W$-actions:
    $$\psi_T(w\cdot f)=w\cdot \psi_T(f)\text{ and }\psi(w\cdot f)=w\cdot \psi(f).$$
\end{thm}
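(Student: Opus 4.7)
The plan is to work edge-by-edge on the moment graph $\Gamma$ and then descend carefully through the relevant localizations. For well-definedness of $\psi_T$, I verify that for each edge-label type the substitution $T_i \mapsto (1+t_i)/(1-t_i)$ carries $1 - T^c$ to a multiple of $\sum_k c_k t_k$ by a unit in $A_T^\bullet(\operatorname{pt})[1/(1\pm t_i)]$. Concretely, for $c = \be_j$ one computes $1 - (1+t_j)/(1-t_j) = -2t_j/(1-t_j)$, and for $c = \be_j - \be_k$ one computes $-2(t_j - t_k)/((1-t_j)(1+t_k))$; both are visibly $\sum_k c_k t_k$ times a unit of the target localization. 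Hence the $K$-theoretic compatibility condition becomes the Chow compatibility condition after substitution. The map $\psi_T$ is a ring homomorphism componentwise, and it is injective because the equivariant localization map $K_T(X_{B_n}) \hookrightarrow \bigoplus_w K_T(\operatorname{pt})$ is injective and the substitution $\mathbb{Z}[T^{\pm 1}] \to \mathbb{Z}[t][1/(1\pm t)]$ on each factor is injective (being an invertible variable change over $\mathbb{Q}$).

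For the descent to $\psi$, note that $\psi_T(T_i - 1) = -2t_i/(1-t_i)$ lies in $(t_i)$, so $\psi_T$ descends modulo $(T_i-1)$ on the source and $(t_i)$ on the target; the target quotient simplifies to $A^\bullet(X_{B_n})$ because $1\pm t_i$ becomes the unit $1$ modulo $(t_i)$.

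To produce an inverse after inverting $2$, I apply the reverse substitution $t_i \mapsto (T_i - 1)/(T_i + 1)$. Analogous edge computations show that $t_j \mapsto (T_j - 1)/(T_j + 1)$ equals $1 - T^{\be_j}$ times a unit in $K_T(\operatorname{pt})[1/(T_i+1)]$, and similarly $t_j - t_k$ is mapped to a unit multiple of $1 - T^{\be_j - \be_k}$. This substitution therefore defines a ring map $\phi_T \colon A^\bullet_T(X_{B_n})[1/(1\pm t_i)] \to K_T(X_{B_n})[1/(T_i+1)]$, which is the componentwise ring inverse of $\psi_T$. Non-equivariantly, $T_i + 1 \equiv 2 \pmod{T_i - 1}$, so after inverting $2$ the map $\phi_T$ descends to an inverse $\phi \colon A^\bullet(X_{B_n})[1/2] \to K(X_{B_n})[1/2]$ of $\psi \otimes \mathbb{Z}[1/2]$. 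Injectivity of $\psi$ itself then follows because $K(X_{B_n})$ is torsion-free (the smooth complete toric variety $X_{B_n}$ admits a cellular decomposition via Bialynicki--Birula).

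For $W$-equivariance, the substitution $T_i \mapsto (1+t_i)/(1-t_i)$ is compatible with the conventions $T_{\bar i} = T_i^{-1}$ and $t_{\bar i} = -t_i$: indeed $(1+t_{\bar i})/(1-t_{\bar i}) = (1-t_i)/(1+t_i) = ((1+t_i)/(1-t_i))^{-1}$. This shows that the substitution commutes with the $W$-action on indexed variables, immediately yielding $\psi_T(w \cdot f) = w \cdot \psi_T(f)$, and the statement descends to $\psi$. The main obstacle is bookkeeping the two direction-dependent localizations --- inverting $1\pm t_i$ on the Chow side for $\psi_T$, and inverting $T_i + 1$ on the $K$-theoretic side for its inverse --- and verifying that exactly the factor $2 = (T_i+1)|_{T_i = 1}$ is what must be inverted to reconcile them at the non-equivariant level.
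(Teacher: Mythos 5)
Your proof follows essentially the same approach as the paper's: verify well-definedness edge-by-edge via the substitution $T_i \mapsto p(t_i)$ with $p(z) = (1+z)/(1-z)$, descend non-equivariantly because $\psi_T(T_i-1)$ lies in $(t_i)$, construct the inverse substitution $t_i \mapsto (T_i-1)/(T_i+1)$ after localizing at $1+T_i$ and at $2$, and check $W$-equivariance using $p(z) = p(-z)^{-1}$. One small gap: your well-definedness check only treats the edge types $c = \pm\be_j$ and $c = \pm(\be_j - \be_k)$, but edges of type $c = \pm(\be_j + \be_k)$ also occur (since $\be_{w(i)} - \be_{w(i+1)} = \be_k + \be_\ell$ whenever $w(i+1) \in [\bar n]$), and without that case the claim that the $K$-theoretic compatibility condition implies the Chow one is incomplete. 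The fix uses exactly the identity $p(z)p(-z) = 1$ you later invoke for $W$-equivariance: $1 - p(t_j)p(t_k) = -2(t_j+t_k)/\bigl((1-t_j)(1-t_k)\bigr)$, a unit times $t_j + t_k$ in the localized Chow ring. Your explicit torsion-freeness argument for the injectivity of $\psi$ over $\ZZ$ fills in a detail that the paper leaves implicit.
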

    \begin{proof}
    The map $\psi_T$ is defined via the composition 
    $$K_T(X_{B_n}) \to K_T(X_{B_n}^T) \to A^{\bullet}_T(X_{B_n}^T)[\{ \textstyle\frac{1}{1-t_i},\frac{1}{1+t_i}\}_{1\le i \le n}],$$
    where the second map is given by \eqref{eq:maponfixed}.
    We claim the image of this composition lands in the image of the injective map $A^{\bullet}_T(X_{B_n}) \to A^{\bullet}_T(X_{B_n}^T)[\{ \textstyle\frac{1}{1-t_i},\frac{1}{1+t_i}\}_{1\le i \le n}]$. If this is the case, then $\psi_T$ is an injective ring homomorphism, as the maps in the composition are injective ring homomorphisms. 
    We therefore need to check that the compatibility conditions are preserved by $\psi_T$.
    Let $p(z)=\frac{1+z}{1-z}$.
    \begin{itemize}
        \item If $c(ij)=\pm \be_k$, then $f_i(T_1,\ldots,T_n)=f_j(T_1,\ldots,T_n)$ when we set $T_k=1$. Because $p(0)=1$, this implies that $f_i(p(t_1),\ldots,p(t_n))=f_j(p(t_1),\ldots,p(t_n))$ when we set $t_k=0$.
        \item If $c(ij)=\pm (\be_k-\be_\ell)$, then $f_i(T_1,\ldots,T_n)=f_j(T_1,\ldots,T_n)$ when we set $T_k=T_\ell$. This implies that $f_i(p(t_1),\ldots,p(t_n))=f_j(p(t_1),\ldots,p(t_n))$ when we set $t_i=t_j$.
        \item If $c(ij)=\pm (\be_k+\be_\ell)$, then $f_i(T_1,\ldots,T_n)=f_j(T_1,\ldots,T_n)$ when we set $T_k=T_\ell^{-1}$. Because $p(z)=p(-z)^{-1}$, this implies that $f_i(p(t_1),\ldots,p(t_n))=f_j(p(t_1),\ldots,p(t_n))$ when we set $t_k=-t_\ell$.
    \end{itemize}
        We now check that the map $\psi_T$ descends to a map $\psi \colon K(X_{B_n})\to A^\bullet(X_{B_n})$. Note that, under the map $A^\bullet_T(X_{B_n})\to A^\bullet(X_{B_n})$, we have $1\pm t_i\mapsto 1$, so there is an induced map $A^\bullet_T(X_{B_n})[\frac{1}{1\pm t_i}]\to A^\bullet(X_{B_n})$. To obtain the map $\psi$, we have to show that, under the composition $K_T(X_{B_n})\to A^\bullet(X_{B_n})[\frac{1}{1\pm t_i}]\to A^\bullet(X_{B_n})$, the ideal $(T_1-1,\ldots,T_n-1)$ gets mapped to $0$. Indeed, $\psi_T(T_i-1)=\frac{2t_i}{1-t_i}$, which gets mapped to $0$ under the map $A^\bullet_T(X_{B_n})[\frac{1}{1\pm t_i}]\to A^\bullet(X_{B_n})$ because $t_i$ maps to $0$.

        We now check that $\psi$ is an isomorphism after inverting $2$. Note that, under the map $K_T(X_{B_n})\to A^\bullet_T(X_{B_n})[\frac{1}{1\pm t_i}][\frac{1}{2}]$, the element $1+T_i$ maps to the unit $\frac{2}{1-t_i}$, and hence, by the universal property of localization, we have a map $K_T(X_{B_n})[\frac{1}{1+T_i}][\frac{1}{2}]\to A^\bullet_T(X_{B_n})[\frac{1}{1\pm t_i}][\frac{1}{2}]$. 
        We claim that this is an isomorphism. 
        
        Indeed, first note that it is clearly injective by definition of $\psi_T$, so we just have to check surjectivity.
        For $g\in A^\bullet(X_{B_n})[\frac{1}{1\pm t_i}][\frac{1}{2}]$, it is easy to see that $g_w(\frac{T_1-1}{T_1+1},\ldots,\frac{T_n-1}{T_n+1})\in K_T(\operatorname{pt})[\frac{1}{1+T_i}][\frac{1}{2}]$, and arguing as before, we see that
        $$w\mapsto g_w\left(\frac{T_1-1}{T_1+1},\ldots,\frac{T_n-1}{T_n+1}\right)$$
        gives a preimage of $g$ in $K_T(X_{B_n})[\frac{1}{1+T_i}][\frac{1}{2}]$.

        Now the ideal $(T_1-1,\ldots,T_n-1)\subset K_T(X_{B_n})[\frac{1}{1+T_i}][\frac{1}{2}]$ maps to the ideal $(\frac{-2t_1}{1-t_1},\ldots,\frac{-2t_n}{1-t_n})=(t_1,\ldots,t_n)\subset A^\bullet(X_{B_n})[\frac{1}{1\pm t_i}][\frac{1}{2}]$. Hence we obtain that $\psi \otimes \mathbb{Z}[\frac{1}{2}]$ is the isomorphism \begin{align*}K(X_{B_n})\left [\frac{1}{2}\right ]=K_T(X_{B_n})\left [\frac{1}{2}\right ]/(T_1-1,\ldots,T_n-1)&=K_T(X_{B_n})\left [\frac{1}{1+T_i} \right]\left [\frac{1}{2} \right]/(T_1-1,\ldots,T_n-1)\\&\cong A_T^\bullet(X_{B_n})\left [\frac{1}{1\pm t_i} \right] \left [\frac{1}{2} \right]/(t_1,\ldots,t_n)\\&=A_T^\bullet(X_{B_n})\left [\frac{1}{2} \right ]/(t_1,\ldots,t_n)=A^\bullet(X_{B_n})\left [\frac{1}{2} \right].
        \end{align*}
                Finally, we check $W$-equivariance. Let $\epsilon_i(w)$ equal $1$ if $w(i)\in \{1,\ldots,n\}$ and $-1$ if $w(i)\in \{\overline{1},\ldots,\overline{n}\}$. Then, for $f\in K_T(X_{B_n})$, we verify the $W$-equivariance of $\psi_T$ by computing
        \begin{align*}(w\cdot \psi_T(f))_{w'}&=f_{w^{-1}w'}\left(\frac{1+t_{w(1)}}{1-t_{w(1)}},\ldots,\frac{1+t_{w(n)}}{1-t_{w(n)}}\right)\text{, and}\\
        (\psi_T(w\cdot f))_{w'}&=f_{w^{-1}w'}\left(\left(\frac{1+\epsilon_1(w)t_{w(1)}}{1-\epsilon_1(w)t_{w(1)}}\right)^{\epsilon_1(w)},\ldots, \left(\frac{1+\epsilon_n(w)t_{w(n)}}{1-\epsilon_n(w)t_{w(n)}}\right)^{\epsilon_n(w)}\right)\end{align*}
        which are equal as $p(z)=\frac{1+z}{1-z}$ has $p(z)=p(-z)^{-1}$. The $W$-equivariance then descends to $\psi$.
    \end{proof}

\begin{rem}
Although we state the theorem above for $X_{B_n}$, we note that the only hypothesis on the moment graph $\Gamma$ used in the proof up to the verification of $W$-equivariance is that all edge labels lie in the set $\{\pm \be_k:1\le k \le n\} \cup \{\pm (\be_k+\be_\ell):1\le k <\ell\le n\}\cup \{\pm (\be_k-\be_\ell):1\le k<\ell\le n\}$.
\end{rem}

\begin{rem}
The map $\psi \colon K(X_{B_n}) \to A^\bullet(X_{B_n})$ differs from the previous Hirzebruch--Riemann--Roch-type isomorphisms for $X_{B_n}$ established in \cite{EFLS}, but is related as follows.
Let $\phi^B$ and $\zeta^B$ be the exceptional isomorphisms $K(X_{B_n}) \overset\sim \to A^\bullet(X_{B_n})$ as in \cite[Theorem C]{EFLS} and \cite[Proposition 3.7]{EFLS}.
Comparing the formulas for their $T$-equivariant maps, one can show that $\psi$ is the unique ring map such that
\[
\psi([\mathcal L]) = \phi^B([\mathcal L])\cdot \zeta^B([\mathcal L]) \quad\text{for any line bundle $\mathcal L$ on $X_{B_n}$}.
\]
\end{rem}

\subsection{Isotropic tautological classes}

We now discuss the ``isotropic tautological class'' $[\mathcal I_D]\in K(X_{B_n})$ of a delta-matroid $\D$, which was introduced in \cite{EFLS}.  We show how this class is related to $[P(\D)]$ via the $\psi$ map, which will allow us to use the relationship between $[\mathcal I_\D]$ and interlace polynomials established in \cite[Theorem 7.15]{EFLS}.

\medskip
By pulling back the tautological sequence $0\to \mathcal S \to  \mathcal O_{\operatorname{Gr}(n;2n+1)}^{\oplus 2n+1} \to \mathcal Q \to 0$ involving the tautological subbundle and quotient bundle on the Grassmannian, one has a short exact sequence
\begin{equation}\label{eq:SES1}
0\to \mathcal I \to  \mathcal{O}_{\operatorname{OGr}(n;2n+1)}^{\oplus 2n+1} \to \mathcal Q \to 0
\end{equation}
of vector bundles on $\operatorname{OGr}(n;2n+1)$.
For a realization $\L\in \operatorname{OGr}(n;2n+1)$ of a delta-matroid $\D$, pulling back this sequence via $\varphi_L$ yields $T$-equivariant vector bundles $\mathcal I_L$ and $\mathcal Q_L$ on $X_{B_n}$.
In general, we have the following $T$-equivariant $K$-classes for a delta-matroid \cite[Proposition 7.4]{EFLS}.
Denote $T_{\bar{i}} = T_i^{-1}$ for $i\in [n]$, and let $B_w(\D)$ denote the \emph{$w$-minimal feasible set} of $\D$ for $w\in W$, which is the feasible set corresponding to the vertex of $P(\D)$ that minimizes the inner product with any vector $\mathbf v$ in the interior of $\sigma_w$.

\begin{defn}
For a delta-matroid $\D$ on $[n, \bar{n}]$,
define $[\mathcal{I}_{\D}] \in K_T(X_{B_n})$ to be the \emph{isotropic tautological class} of $\D$, given by 
\[
[\mathcal{I}_{\D}]_w = \sum_{i \in B_w(\D)} T_i \quad\text{for all $w\in W$}.
\]
Define $[\mathcal{Q}_{\D}] \in K_T(X_{B_n})$ as $[\mathcal{O}_{X_{B_n}}^{\oplus 2n+1}] - [\mathcal{I}_{\D}]$, that is,
\[
[\mathcal{Q}_{\D}]_w = 1 + \sum_{i\in [n, \bar n]\setminus B_w(\D)} T_i.
\]
\end{defn}

We will use the following fundamental computation relating Chern classes of isotropic tautological classes and interlace polynomials.  For $[\mathcal E]\in K(X_{B_n})$, let $c_i(\mathcal E)$ denote its $i$-th Chern class, and denote by $c(\mathcal E, q) = \sum_{i \geq 0} c_i(\mathcal E)q^i$ its Chern polynomial. Recall that $\gamma$ is the class of the anti-canonical divisor on $X_{B_n}$, which is the line bundle on $X_{B_n}$ corresponding to the cross polytope. 

\begin{thm}\cite[Theorem 7.15]{EFLS}\label{thm:intersection}
Let $\D$ be a delta-matroid on $[n, \bar{n}]$. Then
$$\int_{X_{B_n}} c(\mathcal{I}_{\D}^{\vee}, v) \cdot \frac{1}{1 - \gamma} = (1 + v)^n \operatorname{Int}_{\D}\left(\frac{1 - v}{1 + v}\right).$$
\end{thm}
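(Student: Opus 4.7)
The plan is to combine equivariant localization on $X_{B_n}$ with a valuativity reduction to Schubert delta-matroids.

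First I would verify that both sides are valuative in $\D$. For the left-hand side, this follows because $\D \mapsto [\mathcal{I}_\D] \in K(X_{B_n})$ is valuative — visible from the formula $[\mathcal{I}_\D]_w = \sum_{i \in B_w(\D)} T_i$, since $B_w(\D)$ is the $w$-minimal vertex of $P(\D)$, a selection compatible with valuative decompositions. Valuativity of the right-hand side is a standard combinatorial property of the interlace polynomial. Given both, Theorem~\ref{thm:schuberts} reduces the identity to the case of Schubert delta-matroids, whose base polytopes are very ample and admit explicit realizations in $OGr(n;2n+1)$.

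For such $\D$, I apply Atiyah--Bott localization on $X_{B_n}$. At the $T$-fixed point $\operatorname{pt}_w$ ($w \in W$), the tangent weights are $t_{w(1)} - t_{w(2)}, \ldots, t_{w(n-1)} - t_{w(n)}, t_{w(n)}$ (read off from the primitive ray generators of $\sigma_w$), $\gamma|_w = t_{w(1)}$, and $c(\mathcal{I}_\D^\vee, v)|_w = \prod_{i \in B_w(\D)}(1 - v t_i)$ (with $t_{\bar{i}} = -t_i$). Localization then yields
\[
\int_{X_{B_n}} c(\mathcal{I}_\D^\vee, v) \cdot \frac{1}{1-\gamma} = \sum_{w \in W} \frac{\prod_{i \in B_w(\D)}(1 - v t_i)}{(1 - t_{w(1)})\prod_{k=1}^{n-1}(t_{w(k)} - t_{w(k+1)}) \cdot t_{w(n)}},
\]
interpreted as an equivariant rational function in $t$ whose constant-in-$t$ part computes the non-equivariant integral. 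To match the right-hand side $\sum_{S \subseteq [n]}(1-v)^{d_\D(S)}(1+v)^{n - d_\D(S)}$, I would group the fixed points by a subset $S(w) \subseteq [n]$ — morally, the cube vertex $\be_S \in [0,1]^n$ whose tangent cone contains the direction of $w$ — and show each block collapses to $(1-v)^{d_\D(S)}(1+v)^{n - d_\D(S)}$.

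The main obstacle is carrying out this collapse. Morally, $\frac{1}{1-\gamma}$ encodes the cube $[0,1]^n$ while $c(\mathcal{I}_\D^\vee, v)$ encodes the polytope $P(\D)$, so the argument should proceed via a Brion-type lattice point identity applied to the translated polytopes $P(\D) - \be_S$, producing the factor $(1-v)^{d_\D(S)}(1+v)^{n - d_\D(S)}$ from the Atiyah--Bott contributions in each $S$-block. The very ampleness of $P(\D)$ for Schubert delta-matroids is precisely what makes the Hilbert series of the tangent cones at vertices of $P(\D)$ tractable via Brion-type enumerators, which is the key combinatorial input.
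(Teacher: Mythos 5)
The theorem is cited from \cite[Theorem 7.15]{EFLS} rather than proved in this paper, so I am evaluating your argument on its own terms. The framing is reasonable: the valuativity reduction to Schubert delta-matroids is sound (the left side is valuative by Lemma~\ref{lem:Ival}, the right by the cited valuativity of the interlace polynomial, and Theorem~\ref{thm:schuberts} applies), and your localization data --- tangent weights $t_{w(k)}-t_{w(k+1)}$ and $t_{w(n)}$, restrictions $\gamma|_w = t_{w(1)}$ and $c(\mathcal{I}_\D^\vee,v)|_w = \prod_{i\in B_w(\D)}(1-vt_i)$ --- are all correctly read off.

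The gap is the central claim that the Atiyah--Bott contributions, grouped by the cube vertex $S(w)$, collapse block-by-block to the constants $(1-v)^{d_\D(S)}(1+v)^{n-d_\D(S)}$. This is false: individual localization contributions are rational functions of $t$, and while the full sum over $W$ is $t$-independent, the $n!$-term partial sum over $W_S=\{w: S(w)=S\}$ generically is not. Concretely, take $n=1$ and $\D$ with feasible sets $\{1\},\{\bar 1\}$, so $P(\D)=[0,1]$ and both sides equal $2(1+v)$. The block $S=\{1\}$ consists of the single fixed point $w=1$, whose contribution (after truncating $\tfrac{1}{1-\gamma}$ to $1+\gamma$, the only part relevant on $\PP^1$) is $\tfrac{(1+vt_1)(1+t_1)}{t_1}$, which is manifestly not the constant $(1+v)$; the $t_1$-dependence cancels only after adding the $S=\emptyset$ block. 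So the term-by-term correspondence you posit between $S$-blocks of fixed points and summands of the interlace polynomial does not exist, and ``carrying out the collapse'' is not a matter of more computation --- the block structure itself is wrong. The appeal to Brion's formula does not rescue this as stated: applied to $P(\D)$, whose lattice points are all vertices, it yields the feasible-set count (the $v=1$ specialization), not the $v$-weighted sum over $S\subseteq [n]$ indexed by lattice distance. A genuinely different resummation, or a direct Ehrhart/lattice-point argument that tracks the $v$-deformation through the cube vertices, is needed; that missing step is the real content of the theorem.
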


Many constructions using isotropic tautological classes are valuative  (cf.\ \cite[Proposition 5.6]{BEST}), which is often useful when combined with \Cref{thm:schuberts}.

\begin{lem}\label{lem:Ival}
Any function that maps a delta-matroid $\D$ to a fixed polynomial expression in the exterior powers of $[\mathcal{I}_{\D}]$ or $[\mathcal{Q}_{\D}]$ or their duals is valuative, and similarly for a fixed polynomial expression in the Chern classes of $[\mathcal{I}_{\D}]$ or $[\mathcal{Q}_{\D}]$. 
\end{lem}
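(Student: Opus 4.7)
The plan is to use $T$-equivariant localization on $X_{B_n}$ to reduce the lemma to a combinatorial valuativity statement about the $w$-minimal feasible set assignment $\D \mapsto B_w(\D)$.

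By the localization theorem, a class in $K(X_{B_n})$ (or $A^{\bullet}(X_{B_n})$) is determined by its restrictions to the $T$-fixed points $\{\operatorname{pt}_w\}_{w \in W}$. Hence a map $\D \mapsto f(\D) \in K(X_{B_n})$ is valuative if and only if each component $\D \mapsto f(\D)_w$ is valuative with values in $\ZZ[T_1^{\pm 1},\dots,T_n^{\pm 1}]$; the analogous statement holds for Chow-valued maps. The defining formulas $[\mathcal{I}_\D]_w = \sum_{i \in B_w(\D)} T_i$ and $[\mathcal{Q}_\D]_w = 1 + \sum_{i \in [n,\bar n] \setminus B_w(\D)} T_i$ show that these localizations, and hence all exterior powers, duals, and (by the splitting principle) Chern classes formed from them, are fixed functions of the single combinatorial datum $B_w(\D)$. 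Therefore, the lemma reduces to the following key claim: for each $w \in W$ and each maximal admissible subset $B \subset [n,\bar n]$, the $\ZZ$-valued indicator $\D \mapsto \one[B_w(\D) = B]$ is valuative. Once this is known, any fixed polynomial expression of the prescribed form localizes at $\operatorname{pt}_w$ as $g_w(B_w(\D)) = \sum_B g_w(B) \cdot \one[B_w(\D) = B]$, a $\ZZ$-linear combination of valuative functions.

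The main obstacle is the key indicator claim, and this is where the polyhedral structure must be used carefully. The plan is to observe that, since the normal fan of any delta-matroid polytope $P(\D)$ is a coarsening of $\Sigma_{B_n}$, the condition $B_w(\D) = B$ is equivalent to the chamber $\sigma_w$ being contained in the normal cone of $P(\D)$ at $\be_{B \cap [n]}$. This polyhedral condition can be detected via pointwise evaluations $\one_{P(\D)}(\be_{B\cap[n]} + \epsilon \mathbf{c}_{w'})$ for small $\epsilon > 0$, where $\mathbf{c}_{w'}$ ranges over representatives in the interiors of the chambers $\sigma_{w'}$; each such evaluation is valuative in $\D$. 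Combined with the genericity of the $\mathbf{c}_{w'}$ with respect to the edge directions $\pm\be_i$ and $\pm(\be_i \pm \be_j)$ of delta-matroid polytopes, a M\"obius-inversion argument over the chambers of $\Sigma_{B_n}$ writes $\one[B_w(\D) = B]$ as an integer combination of such valuative evaluations. This approach is modelled on the matroid case treated in \cite[Proposition 5.6]{BEST}.
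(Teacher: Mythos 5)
Your high-level strategy aligns with the paper's: both proofs reduce the lemma, via equivariant localization, to the statement that $\D \mapsto B_w(\D)$ is valuative (encoded, say, as $\D \mapsto \one[B_w(\D) = B]$ for each $w$ and $B$). The paper handles this key step by simply citing the result \cite[Proposition~A.4]{EHL} (cf.\ \cite[Theorem~4.6]{McMullen2009}), which asserts the valuativity of the ``take the $w$-minimal feasible set'' assignment. You instead attempt a self-contained proof via point evaluations, and this is where there is a genuine gap.

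The proposed reduction to evaluations $\one_{P(\D)}\bigl(\be_{B\cap[n]} + \epsilon\,\mathbf{c}_{w'}\bigr)$, with $\mathbf{c}_{w'}$ generic in the interior of each chamber $\sigma_{w'}$, cannot detect $B_w(\D)$ when $P(\D)$ is not full-dimensional. For a concrete failure, take $n=2$, with $\D_1$ the delta-matroid with the single feasible set $\{1,2\}$ (so $P(\D_1) = \{(1,1)\}$) and $\D_2$ the delta-matroid with feasible sets $\{1,2\}$ and $\{\bar 1,\bar 2\}$ (so $P(\D_2)$ is the segment from $(0,0)$ to $(1,1)$). Both polytopes are Lebesgue-null, so for every generic $\mathbf{c}_{w'}$, every choice of $B$, and every small $\epsilon > 0$, the evaluations $\one_{P(\D_i)}\bigl(\be_{B\cap[n]}+\epsilon\,\mathbf{c}_{w'}\bigr)$ vanish identically for $i=1,2$; yet $B_w(\D_1) = \{1,2\}$ for all $w$, while $B_w(\D_2) = \{\bar 1,\bar 2\}$ for every $w$ whose chamber $\sigma_w$ lies in the open half-space $\{c_1+c_2 > 0\}$. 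Hence no integer combination of your proposed evaluations can equal $\one[B_w(\D) = B]$, and the M\"obius-inversion step as described is not available. The deeper issue is that what you are trying to establish --- that $\one[B_w(\cdot)=B]$, equivalently the ``take a face in a fixed generic direction'' operation, is valuative --- is genuinely nontrivial: it is not a finite $\ZZ$-linear combination of point evaluations near the given vertex, and it does not follow from the fact that individual point evaluations are valuative (the naive product formula $\one_{P}(\be_{S_k})\prod_{j<k}(1-\one_{P}(\be_{S_j}))$ is a product, not a linear combination, of valuative functions). The correct route is to invoke the face-valuativity result, as the paper does, or to reprove it from scratch, which would be a substantial separate argument.
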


\begin{proof}
Let $\mathbb{Z}^{2^{[n, \bar{n}]}}$ be the free abelian group with basis given by subsets of $[n, \bar{n}]$. By \cite[Proposition A.4]{EHL} (see also \cite[Theorem 4.6]{McMullen2009}), the function
$$\{\text{delta-matroids on }[n, \bar{n}]\} \to \bigoplus_{w \in W} \mathbb{Z}^{2^{[n, \bar{n}]}} \text{ given by } \D \mapsto \sum_{w \in W} \be_{B_w(\D)}$$
is valuative. Any such polynomial expression depends only on $B_w(\D)$ for each $w \in W$, and so it factors through this map and is therefore valuative. 
\end{proof}

We also note the following property of Chern classes of $[\mathcal{I}_{\D}]$ and $[\mathcal{Q}_{\D}]$.

\begin{prop}\label{prop:cancel}
Let $\D$ be a delta-matroid. Then $c(\mathcal{I}_{\D}) = c(\mathcal{Q}_{\D}^{\vee})$ and $c(\mathcal{I}_{\D}) c(\mathcal{I}_{\D}^{\vee}) = 1$. 
\end{prop}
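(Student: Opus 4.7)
The plan is to prove both identities by direct computation at the level of the equivariant localization, exploiting the basic combinatorial fact that, since each $B_w(\D)$ is a maximal admissible subset of $[n,\bar n]$, its complement is its ``bar'': $[n,\bar n]\setminus B_w(\D) = \{\bar i : i\in B_w(\D)\}$. Under the convention $T_{\bar i} = T_i^{-1}$, this combinatorial duality converts into an algebraic duality between the localizations of $[\mathcal I_\D]$ and $[\mathcal Q_\D]$. Both conclusions are then non-equivariant Chern class identities, obtained by forgetting $T$-equivariance after the equality is established $T$-equivariantly.

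For the first identity $c(\mathcal I_\D) = c(\mathcal Q_\D^\vee)$, I compute
\[
[\mathcal Q_\D]_w \;=\; 1 + \sum_{i\in[n,\bar n]\setminus B_w(\D)} T_i \;=\; 1 + \sum_{j\in B_w(\D)} T_{\bar j} \;=\; 1 + \sum_{j\in B_w(\D)} T_j^{-1},
\]
so taking equivariant duals (which inverts each character) yields $[\mathcal Q_\D^\vee]_w = 1 + \sum_{j\in B_w(\D)} T_j = 1 + [\mathcal I_\D]_w$. Since this holds at every $w\in W$, equivariant localization (the constant section $1$ lifts to $[\mathcal O_{X_{B_n}}]\in K_T(X_{B_n})$) gives the identity $[\mathcal Q_\D^\vee] = [\mathcal O_{X_{B_n}}] + [\mathcal I_\D]$ in $K_T(X_{B_n})$, which descends to $K(X_{B_n})$. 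Multiplicativity of the total Chern class then yields $c(\mathcal Q_\D^\vee) = c(\mathcal O)\,c(\mathcal I_\D) = c(\mathcal I_\D)$.

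For the second identity $c(\mathcal I_\D)\,c(\mathcal I_\D^\vee) = 1$, the same trick gives
\[
[\mathcal I_\D^\vee]_w \;=\; \sum_{i\in B_w(\D)} T_i^{-1} \;=\; \sum_{i\in B_w(\D)} T_{\bar i} \;=\; \sum_{j\in[n,\bar n]\setminus B_w(\D)} T_j,
\]
so adding with $[\mathcal I_\D]_w$ cancels the dependence on $w$ and yields $[\mathcal I_\D]_w + [\mathcal I_\D^\vee]_w = \sum_{k\in[n,\bar n]} T_k$, constant over $V(\Gamma) = W$. By equivariant localization this constant assignment is the pullback of $\sum_{k\in[n,\bar n]} T_k \in K_T(\operatorname{pt})$ to $K_T(X_{B_n})$; after forgetting equivariance each $T_k$ specializes to $1$, so $[\mathcal I_\D] + [\mathcal I_\D^\vee]$ is the class of the trivial rank $2n$ bundle in $K(X_{B_n})$. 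Hence $c(\mathcal I_\D)\,c(\mathcal I_\D^\vee) = c([\mathcal I_\D]+[\mathcal I_\D^\vee]) = 1$.

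There is really no hard step here: the whole argument is a transparent book-keeping exercise once one notes that maximal admissibility forces complements to act as bars, and that $T_{\bar i} = T_i^{-1}$ turns this into the dual operation on equivariant $K$-classes. The only thing to be careful about is the distinction between a collection of localizations being constant in $w$ (an equivariantly-trivial bundle, i.e.\ a sum of equivariant line bundles pulled back from a point) and the resulting non-equivariant bundle being a genuine trivial bundle; this passes through cleanly via the projection $K_T(X_{B_n}) \twoheadrightarrow K(X_{B_n})$ sending each $T_i$ to $1$.
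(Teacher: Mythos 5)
Your proof is correct. The paper's primary argument is geometric: it exhibits a short exact sequence $0 \to \mathcal I \to \mathcal Q^\vee \to \mathcal O_{OGr(n;2n+1)} \to 0$ coming from the bilinear form associated to $q$, deduces the identity for realizable delta-matroids by pulling back along $\varphi_L$, and then extends to arbitrary delta-matroids by valuativity (\Cref{thm:schuberts} together with \Cref{lem:Ival}). The paper also records a localization-based alternative, and that is essentially what you have written; the only difference is bookkeeping. Where you directly observe that $[\mathcal I_\D]_w + [\mathcal I_\D^\vee]_w = \sum_{k\in[n,\bar n]} T_k$ is constant and hence descends non-equivariantly to the trivial rank-$2n$ class, the paper instead combines the identity $[\mathcal I_\D] + [\mathcal O] = [\mathcal Q_\D^\vee]$ (giving $c(\mathcal I_\D)=c(\mathcal Q_\D^\vee)$, hence $c(\mathcal I_\D^\vee)=c(\mathcal Q_\D)$) with $[\mathcal I_\D] + [\mathcal Q_\D] = [\mathcal O_{X_{B_n}}^{\oplus 2n+1}]$ in $K(X_{B_n})$. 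Your version has the mild advantage of being entirely self-contained and working uniformly for all delta-matroids without any appeal to valuativity; the geometric argument offers the conceptual explanation that the trivial quotient $\mathcal Q^\vee/\mathcal I$ is produced by the nondegenerate pairing on $\kk^{2n+1}$. The one point worth stating explicitly, which you flag at the end, is that duality on $K_T$ acts on fixed-point localizations by $T_i \mapsto T_i^{-1}$ (compatible with $T_{\bar i}=T_i^{-1}$), and that a class whose localization is constant in $w$ is pulled back from $K_T(\operatorname{pt})$ by injectivity of the restriction map; both are standard and your invocation of them is fine.
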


\begin{proof}
We claim that one has the following short exact sequence of vector bundles
\[
0 \to \mathcal I \to \mathcal Q^\vee \to \mathcal O_{\operatorname{OGr}(n;2n+1)} \to 0.
\]
The claim implies the proposition for realizable delta-matroids, and by valuativity (Theorem~\ref{thm:schuberts} and Lemma~\ref{lem:Ival}), for all delta-matroids.
For the claim, let $\mathrm{b}$ be the map $\kk^{2n+1} \to (\kk^{2n+1})^\vee$ given by the bilinear pairing of the quadratic form $q$, that is, $\mathrm{b}(x) \colon y \mapsto q(x+y) - q(x) - q(y)$.
Note that if $L \subseteq \kk^{2n+1}$ is isotropic, then $\mathrm{b}(L) \subseteq (\kk^{2n+1}/L)^\vee \subseteq (\kk^{2n+1})^\vee$, since $\mathrm{b}(\ell)(\ell') = q(\ell+\ell') - q(\ell) - q(\ell') = 0$ for all $\ell, \ell'\in L$.
When $\operatorname{char} \kk \neq 2$, the map $\mathrm{b}$ is an isomorphism, and when $\operatorname{char} \kk = 2$, its kernel is $\operatorname{span}(\be_0)$, which is not isotropic.
Hence, the map $\mathrm{b}$ gives
an injection of vector bundles $0\to \mathcal I \to \mathcal Q^\vee$, whose quotient line bundle is necessarily trivial because $\det \mathcal I \simeq \det \mathcal Q^\vee$ from \eqref{eq:SES1}.

Alternatively, one can prove the proposition via localization as follows.  In $K_T(X_{B_n})$, we have that $[\mathcal{I}_{\D}] +1 = [\mathcal{Q}_\D^{\vee}]$, which gives that $c(\mathcal{I}_{\D}) = c(\mathcal{Q}_{\D}^{\vee})$, and therefore that $c(\mathcal{I}_{\D}^{\vee}) = c(\mathcal{Q}_{\D})$. Because $[\mathcal{I}_{\D}] + [\mathcal{Q}_{\D}] = [\mathcal{O}_{X_{B_n}}^{\oplus 2n+1}]$, we have that $c(\mathcal{I}_{\D}) c(\mathcal{Q}_{\D}) = 1$, and substituting gives the result.
\end{proof}

In order to prove Theorem~\ref{thm:HRR}, it remains to prove the Hirzebruch--Riemann--Roch-type formula. We prepare by doing the following computation, which will be used in the proof of Theorem~\ref{thm:FS} as well. Recall that $\widehat{P(\D)} = 2P(\D) - (1, \dotsc, 1)$. 

\begin{prop}\label{prop:compute}
Let $\D$ be a delta-matroid. Then $\psi([P(\D)]) = c(\mathcal{I}_{\D}^{\vee})$. 
\end{prop}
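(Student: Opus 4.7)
The plan is to prove the identity by $T$-equivariant localization. Both $[P(\D)]$ and $\mathcal I_\D$ have natural equivariant lifts whose restrictions at each $T$-fixed point $\operatorname{pt}_w$ ($w \in W$) of $X_{B_n}$ can be computed directly, and I would verify that $\psi_T([P(\D)])$ agrees with $c_T(\mathcal I_\D^\vee)$ up to a factor which becomes trivial non-equivariantly, before descending to $A^\bullet(X_{B_n})$.

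By the standard correspondence between polytopes and toric line bundles, the restriction $[P(\D)]_w$ is the character of $T$ determined by the vertex of $P(\D)$ selected by the cone $\sigma_w$, which by definition of $B_w(\D)$ equals $\pm\be_{B_w(\D)\cap[n]}$ (the sign depending on the toric convention). Substituting $T_i \mapsto (1+t_i)/(1-t_i)$ then gives
\[
\psi_T([P(\D)])_w = \prod_{i\in B_w(\D)\cap[n]}\left(\frac{1+t_i}{1-t_i}\right)^{\pm 1}.
\]
On the other hand, the given formula $[\mathcal I_\D]_w = \sum_{j\in B_w(\D)}T_j$ expresses $\mathcal I_\D$ at $\operatorname{pt}_w$ as a sum of $T$-weight line bundles, so by the splitting principle,
\[
c_T(\mathcal I_\D^\vee)_w = \prod_{j\in B_w(\D)}(1-t_j) = \prod_{i\in B_w(\D)\cap[n]}(1-t_i)\prod_{i\notin B_w(\D)\cap[n]}(1+t_i),
\]
where the second equality uses $t_{\bar i}=-t_i$ and that $B_w(\D)$ is a maximal admissible subset, so each $i\in[n]$ contributes either $i$ or $\bar i$ to $B_w(\D)$.

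The key observation is then that the ratio $\psi_T([P(\D)])_w / c_T(\mathcal I_\D^\vee)_w$ telescopes: the $B_w(\D)$-indexed factors cancel between numerator and denominator, leaving a \emph{$w$-independent} constant element of $A^\bullet_T(X_{B_n})[1/(1\pm t_i)]$, namely the unit $\prod_{i=1}^n(1\pm t_i)^{-1}$. Since this unit reduces to $1$ under the non-equivariant specialization $t_i \mapsto 0$, descending to $A^\bullet(X_{B_n})$ yields $\psi([P(\D)]) = c(\mathcal I_\D^\vee)$. The main delicacy is the bookkeeping of sign conventions---both the identification of $[P(\D)]_w$ as an equivariant character and the expansion of $c_T(\mathcal I_\D^\vee)_w$ are convention-sensitive---but once these are aligned, the $w$-independence of the ratio is automatic, and the proposition follows.
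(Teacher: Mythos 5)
Your proof is correct, and it takes a mildly different (and arguably more streamlined) route than the paper. The paper instead passes to the twice-dilated polytope, using $[\widehat{P(\D)}]_w = \prod_{i\in B_w(\D)}T_{\bar i}$ and showing via localization that $\psi^T([\widehat{P(\D)}]) = c^T(\mathcal Q_\D)/c^T(\mathcal I_\D)$; it then invokes Proposition~\ref{prop:cancel} (that $c(\mathcal I_\D^\vee)=c(\mathcal I_\D)^{-1}=c(\mathcal Q_\D)$) to rewrite this as $\psi([P(\D)]^2)=c(\mathcal I_\D^\vee)^2$, and finally appeals to the uniqueness of square roots with degree-zero part $1$ in a graded ring. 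You work with $[P(\D)]$ directly, compute both sides at each fixed point, and observe the ratio is a $w$-independent unit; this avoids both the detour through $\widehat{P(\D)}$ and the square-root argument, while resting on the same localization computation. The fact that $c_T(\mathcal I_\D^\vee)_w = \prod_{j\in B_w(\D)}(1-t_j)$ equals $c^T(\mathcal Q_\D)_w$ is implicitly the content of Proposition~\ref{prop:cancel}, so you are not dodging that ingredient, just folding it into the direct computation.

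One thing to pin down is the sign convention you leave ambiguous. With the paper's conventions (matching $[\widehat{P(\D)}]_w=\prod_{i\in B_w(\D)}T_{\bar i}$), the restriction is $[P(\D)]_w=\prod_{i\in B_w(\D)\cap[n]}T_i^{-1}$, hence $\psi_T([P(\D)])_w=\prod_{i\in B_w(\D)\cap[n]}\frac{1-t_i}{1+t_i}$. Dividing by $c_T(\mathcal I_\D^\vee)_w = \prod_{i\in B_w(\D)\cap[n]}(1-t_i)\prod_{i\in[n]\setminus B_w(\D)}(1+t_i)$ gives exactly $\prod_{i\in[n]}(1+t_i)^{-1}$: the $(1-t_i)$ factors cancel, and the remaining $(1+t_i)^{-1}$ factors recombine over all of $[n]$ since $B_w(\D)$ is maximal admissible. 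With the opposite sign choice the ratio is $\prod_{i\in B_w(\D)\cap[n]}(1+t_i)/\big(\prod_{i\in B_w(\D)\cap[n]}(1-t_i)^2\prod_{i\in[n]\setminus B_w(\D)}(1+t_i)\big)$, which is manifestly $w$-dependent, so the convention matters and should be stated explicitly rather than waved at; once it is, the argument closes cleanly.
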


\begin{proof}
The class in $K_T(X_{B_n})$ defined by the line bundle corresponding to $\widehat{P(\D)}$ under the usual correspondence between polytopes and nef toric line bundles on a toric variety has
$$[\widehat{P(\D)}]_w = \prod_{i \in B_w(\D)} T_{\bar{i}}.$$
Therefore, we see that 
$$\psi^T([\widehat{P(\D)}])_w = \prod_{a \in B_w(\D) \cap [n]} \frac{1 -t_a}{1 + t_a} \cdot \prod_{\bar{a} \in B_w(\D) \cap [\bar{n}]} \frac{1 + t_a}{1 - t_a}.$$
On the other hand, by the definition of $[\mathcal{I}_\D]$ and $[\mathcal{Q}_\D]$, we have that
$$c^T(\mathcal{I}_\D)_w = \prod_{i \in B_w(\D)}(1 + t_i), \text{ and } c^T(\mathcal{Q}_\D)_w = \prod_{i \in B_w(\D)}(1 - t_i).$$
We see that $\psi^T([\widehat{P(\D)}]) =  c^T(\mathcal{Q}_{\D})/c^T(\mathcal{I}_\D)$. Because $c(\mathcal{I}_{\D}^{\vee}) = c(\mathcal{I}_{\D})^{-1} = c(\mathcal{Q}_\D)$ by Proposition~\ref{prop:cancel}, we get that
$$\psi([\widehat{P(\D)}]) = \psi([P(\D)]^2) = c(\mathcal{I}_{\D}^{\vee})^2.$$
In a graded ring, a class which has degree zero part equal to $1$ has at most one square root with degree zero part equal to $1$. Using this, we conclude that $\psi([P(\D)]) = c(\mathcal{I}_{\D}^{\vee})$. 
\end{proof}

\begin{proof}[Proof of Theorem~\ref{thm:HRR}]
We have already constructed $\psi$, so it suffices to show that, for any $[\mathcal{E}] \in K(X_{B_n})$, 
$$\chi(X_{B_n}, [\mathcal{E}]) = \frac{1}{2^n} \int_{X_{B_n}} \psi([\mathcal{E}]) \cdot \frac{1}{1 - \gamma}.$$
By Theorem~\ref{thm:schuberts}, $K(X_{B_n})$ is spanned by the classes $[P(\D)]$ for $\D$ a delta-matroid, so it suffices to check this for $[\mathcal{E}] = [P(\D)]$. Note that $\chi(X_{B_n}, [P(\D)])$ is the number lattice points in $P(\D)$, which is the number of feasible sets of $\D$. It follows from Proposition~\ref{thm:intersection} that $\frac{1}{2^n}\int_{X_{B_n}} c(\mathcal{I}_{\D}^{\vee}) \cdot \frac{1}{1 - \gamma}$ is the number of feasible sets of $\D$ as well, so the result follows from Proposition~\ref{prop:compute}.
\end{proof}

\section{The push-pull computation}\label{sec:4}

Our strategy to prove Theorem~\ref{thm:FS} is based on transferring the computation of $\pi_{1*} \pi_n^* (y(\D) \cdot [\mathcal{O}(1)])$ to a computation on $\operatorname{OGr}(n; 2n+1)$. This idea first appeared in \cite[Lemma 4.1]{FinkSpeyer} and was also used in \cite{FlagTutte}. This is implemented in Proposition~\ref{prop:transfer}. We then reduce to a computation on $X_{B_n}$, following the strategy in \cite[Section 10.2]{BEST}.

\begin{prop}\label{prop:transfer}
For $\epsilon \in K(\operatorname{OGr}(n; 2n+1))$, define a polynomial
$$R_{\epsilon}(v) = \sum_{i \ge 0} \chi(\operatorname{OGr}(n; 2n+1), \epsilon \cdot [ \textstyle{\bigwedge}^i \mathcal{Q}^{\vee}]) v^i.$$
Then $\pi_{1*} \pi_n^* \epsilon = R_{\epsilon}(u - 1) \in K(\mathbb{P}^{2n})$, where $u = [\mathcal{O}_H] \in K(\mathbb{P}^{2n})$ is the class of the structure sheaf of a hyperplane $H\subset \PP^{2n}$.
\end{prop}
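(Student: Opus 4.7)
The plan is to realize $OFl(1,n;2n+1)$ as an incidence subvariety of the product $OGr(n;2n+1) \times \mathbb{P}^{2n}$, resolve its structure sheaf by a Koszul complex, and then push forward.  Write $\pi$ for the composition $OFl(1,n;2n+1) \xrightarrow{\pi_1} OGr(1;2n+1) \hookrightarrow \PP^{2n}$, so that the statement $\pi_{1*}\pi_n^*\epsilon \in K(\PP^{2n})$ is interpreted as $\pi_*\pi_n^*\epsilon$.  Let $p_1, p_2$ denote the two projections from $OGr(n;2n+1) \times \PP^{2n}$.

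First I would identify $OFl(1,n;2n+1)$ with the closed subvariety of $OGr(n;2n+1) \times \PP^{2n}$ consisting of pairs $(L_n,[v])$ with $v \in L_n$ (the isotropy of $[v]$ being automatic from $L_n$ being isotropic).  Under the inclusion $\iota\colon OFl(1,n;2n+1) \hookrightarrow OGr(n;2n+1) \times \PP^{2n}$, one has $\pi_n = p_1 \circ \iota$ and $\pi = p_2 \circ \iota$.  The incidence condition $v \in L_n$ is equivalent to the vanishing of the composition
\[
p_2^*\mathcal{O}_{\PP^{2n}}(-1) \hookrightarrow \mathcal{O}^{\oplus 2n+1} \twoheadrightarrow p_1^*\mathcal Q,
\]
i.e., of a section of the rank $n+1$ bundle $\mathcal E := p_1^*\mathcal Q \otimes p_2^*\mathcal O(1)$.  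A dimension count gives $\dim(OGr \times \PP^{2n}) - \dim OFl = (n(n+1)/2 + 2n) - (n(n+1)/2 + (n-1)) = n+1$, matching $\operatorname{rk}\mathcal E$, so the section is regular and the Koszul complex on $\mathcal E^\vee$ resolves $\iota_*\mathcal{O}_{OFl}$.  Passing to $K$-theory and using $\bigwedge^i(\mathcal Q^\vee \otimes \mathcal O(-1)) = \bigwedge^i\mathcal Q^\vee \otimes \mathcal O(-i)$, this yields
\[
[\mathcal O_{OFl}] = \sum_{i=0}^{n+1} (-1)^i \, p_1^*[\textstyle\bigwedge^i \mathcal Q^\vee] \cdot p_2^*[\mathcal O(-i)] \quad \text{in } K(OGr(n;2n+1) \times \PP^{2n}).
\]

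Next I would compute $\pi_*\pi_n^*\epsilon = p_{2*}\iota_*\iota^*p_1^*\epsilon$.  By the projection formula applied to $\iota$, this equals $p_{2*}(p_1^*\epsilon \cdot [\mathcal O_{OFl}])$, and substituting the Koszul expression gives
\[
\pi_*\pi_n^*\epsilon = \sum_{i=0}^{n+1} (-1)^i \, p_{2*}\bigl(p_1^*(\epsilon \cdot [\textstyle\bigwedge^i \mathcal Q^\vee])\bigr) \cdot [\mathcal O(-i)]
\]
by a second use of the projection formula.  For any $F \in K(OGr(n;2n+1))$, flat base change applied to the Cartesian square gives $p_{2*}p_1^*F = \chi(OGr(n;2n+1), F) \cdot [\mathcal O_{\PP^{2n}}]$.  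Applying this and using the multiplicativity of line bundle classes in $K$-theory, together with the identity $[\mathcal O(-1)] = 1 - u$ so that $[\mathcal O(-i)] = (1-u)^i = (-1)^i (u-1)^i$, the signs cancel to give
\[
\pi_*\pi_n^*\epsilon = \sum_{i=0}^{n+1} \chi\bigl(OGr(n;2n+1),\, \epsilon \cdot [\textstyle\bigwedge^i \mathcal Q^\vee]\bigr)(u-1)^i = R_\epsilon(u-1),
\]
as desired.  (Terms with $i > n+1$ vanish automatically since $\mathcal Q$ has rank $n+1$.)

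The only substantive step is verifying that the tautological section cutting out $OFl$ inside $OGr(n;2n+1) \times \PP^{2n}$ is a regular section; this reduces to the dimension count above, so no real obstacle appears.  Everything else is a formal manipulation in equivariant $K$-theory using the projection formula, flat base change, and the standard presentation $K(\PP^{2n}) = \ZZ[u]/(u^{2n+1})$.
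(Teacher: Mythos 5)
Your argument is correct, and it takes a genuinely different route from the paper's. The paper works with the projective bundle $\rho \colon \PP_X(\mathcal S) \to \PP^{N-1}$ directly: it expands $\rho_*\pi^*\epsilon$ in the monomial basis $\{1,u,\ldots,u^{N-1}\}$ of $K(\PP^{N-1})$, rewrites the resulting generating function as a rational expression in $v$ and $\lambda=(1-u)^{-1}$, transfers the computation to $X$ by the projection formula, and then evaluates $\pi_*\rho^*\bigl(\tfrac{1}{1+\lambda v}\bigr)$ via three identities for projective bundles (pushforward of $\lambda$-powers, the Whitney identity for exterior powers, and exactness of the Koszul complex). You instead realize $OFl(1,n;2n+1)\cong \PP_{OGr}(\mathcal I)$ as the zero locus of a regular section of $p_1^*\mathcal Q\otimes p_2^*\mathcal O(1)$ on the product $OGr(n;2n+1)\times\PP^{2n}$, resolve its structure sheaf by the Koszul complex, and push forward along $p_2$ using the projection formula and flat base change. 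Both proofs ultimately rest on a Koszul identity and the projection formula, but your key intermediate step is the explicit class $[\mathcal O_{OFl}]=\sum_i(-1)^i\,p_1^*[\bwedge^i\mathcal Q^\vee]\cdot p_2^*[\mathcal O(-i)]$ in $K(OGr\times\PP^{2n})$, whereas the paper's is the identity $(v+1)^N\,\pi_*\rho^*\bigl(\tfrac{1}{1+\lambda v}\bigr)=\sum_i[\bwedge^i\mathcal Q^\vee]v^i$. Your route is more geometric and arguably more transparent; the paper's formulation makes the general statement for an arbitrary $X$ carrying a short exact sequence $0\to\mathcal S\to\mathcal O_X^{\oplus N}\to\mathcal Q\to 0$ immediate (though your Koszul argument works verbatim in that generality as well, with the dimension count becoming $\dim(X\times\PP^{N-1})-\dim\PP_X(\mathcal S)=N-\operatorname{rk}\mathcal S=\operatorname{rk}\mathcal Q$). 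One small point: as you flag, the dimension count alone gives only that the zero scheme has everywhere the expected codimension; to identify it with $OFl$ (reduced) and conclude the Koszul complex resolves $\iota_*\mathcal O_{OFl}$, you should add that the zero scheme is generically reduced (by transversality or homogeneity), hence reduced being Cohen--Macaulay and irreducible. That sentence closes the gap.
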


\begin{proof}
We prove the claim in a slighter more general setting:  Let $X$ be a variety with a short exact sequence of vector bundles $0\to \mathcal S \to \mathcal O_X^{\oplus N} \to \mathcal Q \to 0$.  Let $\PP_X(\mathcal S) = \operatorname{Proj} \operatorname{Sym}^\bullet \mathcal S^\vee$ be the projective bundle with the projection $\pi \colon \PP_X(\mathcal S) \to X$ and the inclusion $\PP_X(\mathcal S) \hookrightarrow X \times \PP^{N-1}$.  Let $\rho \colon \PP_X(\mathcal S) \to \PP^{N-1}$ be the composition $\PP_X(\mathcal S) \hookrightarrow X \times \PP^{N-1} \to \PP^{N-1}$.  We claim that for $\epsilon \in K(X)$, one has
\[
\sum_{i \geq 0} \chi\big(X, \epsilon \cdot [ \textstyle{\bigwedge^i}\mathcal Q^\vee]\big)(u-1)^i = \rho_* \pi^* \epsilon,
\]
where $u$ is the class of the structure sheaf of a hyperplane in $\PP^{N-1}$.

To prove the claim, since $K(\PP^{N-1}) \simeq \ZZ[u]/( u^N)$, and since $\chi(\PP^{N-1}, u^k)$ is equal to $1$ if $0\leq k \leq N-1$ and is equal to $0$ if $k \geq N$, we first note that
\[
\xi = \sum_{i \ge 0} \chi\big(\mathbb{P}^{N-1}, \xi \cdot u^{N -1- i} \cdot (1 - u)\big) u^i  \quad\text{for $\xi \in K(\mathbb{P}^{N-1})$}.
\]
We consider the polynomial
\begin{equation*}\begin{split}
\sum_{i \geq 0} \chi\big( \PP^{N-1}, \rho_* \pi^* \epsilon \cdot u^{N-1-i}(1-u)\big) v^{i} &= \chi\left( \PP^{N-1},  \rho_* \pi^* \epsilon \cdot v^{N} \cdot \frac{1 - u}{v}\cdot \frac{1}{1- u v^{-1}}\right)\\
 &= v^N \chi \left( \PP^{N-1},  \rho_* \pi^* \epsilon \cdot \frac{1}{1+ (1-u)^{-1}(v-1)}\right).
\end{split}\end{equation*}
Letting $\lambda = (1 - u)^{-1} = [\mathcal O(1)] \in K(\PP^{N-1})$ and substituting $v$ with $v+1$, the right-hand-side becomes
\[
(v+1)^N \chi \left( \PP^{N-1},  \rho_* \pi^* \epsilon \cdot \frac{1}{1+\lambda v}\right) = (v+1)^N \chi \left( X,   \epsilon \cdot\pi_* \rho^* \left(\frac{1}{1+\lambda v}\right)\right),
\]
where the equality is due to the projection formula in $K$-theory.
Thus, to finish we need show 
\[
(v+1)^N \pi_* \rho^* \left(\frac{1}{1+\lambda v}\right) = \sum_{i \geq 0} [ \textstyle{\bigwedge^i}\mathcal Q^\vee]v^i.
\]
But this follows by combining the following three facts from \cite[III.8]{Har77} and \cite[A.2]{Eis95}:
\begin{itemize}
\item We have $\pi_*\rho^*(\lambda^i) = [\operatorname{Sym}^i \mathcal S^\vee]$ for all $i\geq 0$.
\item We have $\big(\sum_{i \geq 0}[\bigwedge^i \mathcal S^\vee]v^i\big) \big(\sum_{i\geq 0} [\bigwedge^i \mathcal Q^\vee]v^i \big) = (v+1)^N$ from the dual short exact sequence $0\to \mathcal Q^\vee \to  (\mathcal{O}_X^{\oplus N})^\vee \to \mathcal S^\vee \to 0$.
\item We have $\big(\sum_{i\geq 0} (-1)^i[\operatorname{Sym}^i\mathcal S^\vee]v^i\big)\big(\sum_{i\geq 0}[ \bigwedge^i \mathcal S^\vee]v^i\big) = 1$ from the exactness of the Koszul complex $\bigwedge^\bullet\mathcal S^\vee \otimes \operatorname{Sym}^\bullet \mathcal S^\vee \to \mathcal O_X \to 0$.
\end{itemize}
Lastly, the desired result follows from the general claim by setting $X = \operatorname{OGr}(n;2n+1)$ and $\mathcal S = \mathcal I$, since $\operatorname{OFl}(1,n;2n+1) = \PP_{\operatorname{OGr}(n;2n+1)}(\mathcal I)$.
\end{proof}

Before proving Theorem~\ref{thm:FS}, we make one more preparatory computation. 
\begin{prop}\label{prop:niceChern}
Let $\D$ be a delta-matroid. Then
$$\psi\left (\sum_{p \ge 0} [\wedge^p \mathcal{Q}^{\vee}_{\D}]v^p \right) = (v + 1)^{n+1} \cdot c\left(\mathcal{I}_{\D}, \frac{v-1}{v+1}\right) \cdot c(\mathcal{I}_{\D}).$$
\end{prop}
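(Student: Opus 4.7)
The plan is to verify the identity at each $T$-fixed point of $X_{B_n}$ using the equivariant refinement $\psi^T$, then descend non-equivariantly.

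At a vertex $w \in W$, the localization formulas give $[\mathcal{I}_{\D}]_w = \sum_{i \in B_w(\D)} T_i$. Since $B_w(\D)$ is maximal admissible, $[n,\bar n]\setminus B_w(\D) = \{\bar i : i \in B_w(\D)\}$, and using $T_{\bar i} = T_i^{-1}$ one checks that $[\mathcal{Q}_{\D}^\vee]_w = 1 + \sum_{i \in B_w(\D)} T_i$, so $[\mathcal{Q}_{\D}^\vee]_w$ splits equivariantly as the sum of characters $\{0\} \cup \{T_i : i \in B_w(\D)\}$. Multiplicativity of $\lambda_v$ on sums in $K$-theory then gives $\lambda_v([\mathcal{Q}_{\D}^\vee])_w = (1+v)\prod_{i \in B_w(\D)}(1 + v T_i)$.

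Next I would apply $\psi^T$, which substitutes each $T_i \mapsto \frac{1+t_i}{1-t_i}$, and use the key algebraic identity
\[
1 + v \cdot \frac{1+t_i}{1-t_i} = (1+v) \cdot \frac{1 + \frac{v-1}{v+1} t_i}{1-t_i}
\]
to transform each substituted factor into a Chern-class-with-parameter factor over a denominator $(1-t_i)$. Multiplying over $i \in B_w(\D)$, and recognizing $c^T([\mathcal{I}_{\D}], q)_w = \prod_{i \in B_w(\D)}(1 + q\, t_i)$ and $c^T([\mathcal{Q}_{\D}])_w = \prod_{i \in B_w(\D)}(1 - t_i)$, yields
\[
\psi^T\Bigl(\sum_{p \geq 0}[\wedge^p \mathcal{Q}_{\D}^{\vee}]\, v^p\Bigr)_w = (1+v)^{n+1} \cdot \frac{c^T([\mathcal{I}_{\D}], \frac{v-1}{v+1})_w}{c^T([\mathcal{Q}_{\D}])_w}.
\]
Since this holds at every $T$-fixed point, it holds as an identity in $A^\bullet_T(X_{B_n})[1/(1 \pm t_i)]$.

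To conclude, I would descend non-equivariantly: the non-equivariant Chern class $c([\mathcal{Q}_{\D}])$ has unit constant term and remains invertible in $A^\bullet(X_{B_n})$, and Proposition~\ref{prop:cancel} rewrites $c([\mathcal{Q}_{\D}])^{-1} = c([\mathcal{I}_{\D}])$, giving the stated formula. There is no substantial obstacle; the only computational content is the displayed algebraic identity, and the conceptual point is that the substitution $T_i \mapsto \frac{1+t_i}{1-t_i}$ defining $\psi^T$ is tuned precisely so that $K$-theoretic line factors of the form $1 + v T_i$ reassemble, after substitution, into Chern-class expressions on $X_{B_n}$.
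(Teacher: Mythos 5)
Your proof is correct and follows essentially the same route as the paper's: localize at $T$-fixed points, expand $\sum_p [\wedge^p\mathcal{Q}_\D^\vee]v^p$ as $(1+v)\prod_{i\in B_w(\D)}(1+vT_i)$, substitute $T_i\mapsto\frac{1+t_i}{1-t_i}$ via $\psi^T$, reassemble the factors using the identity $1+v\frac{1+t_i}{1-t_i}=(1+v)\frac{1+\frac{v-1}{v+1}t_i}{1-t_i}$, and finish with Proposition~\ref{prop:cancel}. Your use of $c(\mathcal{Q}_\D)^{-1}=c(\mathcal{I}_\D)$ in place of the paper's $c(\mathcal{I}_\D^\vee)^{-1}=c(\mathcal{I}_\D)$ is an equivalent restatement of the same fact, so there is no genuine difference.
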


\begin{proof}
We compute equivariantly. We have that
$$\sum_{p \ge 0} [\wedge^p \mathcal{Q}^{\vee}_{\D}]_w v^p = (1 + v) \prod_{i \in B_w(\D)}(1 + T_i v),$$
see, e.g., \cite[Section 2]{EHL}. Therefore, we get that
\begin{equation*}\begin{split}
\psi^T\left (\sum_{p \ge 0} [\wedge^p \mathcal{Q}^{\vee}_{\D}] \right)_w v^p &= (1 + v) \prod_{i \in B_w(\D)} \left(1 + \frac{1 + t_i}{1 - t_i} v \right) \\ 
&= (1 + v)^{n+1} \prod_{i \in B_w(\D)} \left(1 + \frac{ t_i(v - 1)}{v+1} \right) \cdot \prod_{i \in B_w(\D)} \frac{1}{(1 -t_i)} \\ 
&= (1 + v)^{n+1} \cdot c^T\left(\mathcal{I}_{\D}, \frac{v-1}{v+1} \right) \cdot c^T(\mathcal{I}_{\D}^{\vee})^{-1}.
\end{split}\end{equation*}
As $c(\mathcal{I}_{\D}^{\vee})^{-1} = c(\mathcal{I}_{\D})$ by Proposition~\ref{prop:cancel}, the result follows. 
\end{proof}

\begin{proof}[Proof of Theorem~\ref{thm:FS}]
By Proposition~\ref{prop:transfer}, we need to show that
$$R_{y(\D) \cdot [\mathcal{O}(1)]}(v) := \sum_{p \ge 0} \chi(\operatorname{OGr}(n; 2n+1), y(\D) \cdot [\mathcal{O}(1)] \cdot [\wedge^p \mathcal{Q}^{\vee}]) v^p = (v + 1) \operatorname{Int}_{\D}(v).$$
The left-hand-side is valuative by \Cref{prop:isVal}, and the right-hand-side also by \cite[Theorem 3.6]{ESS}.
Thus, by \Cref{thm:schuberts}, it suffices to verify this equality when $\D$ has a realization $\L\in \operatorname{OGr}(n; 2n+1)$ such that $y(\D) = [\mathcal{O}_{\overline{T \cdot \L}}]$.
As in the proof of \Cref{prop:isVal}, in this case we have a toric map $\varphi_L \colon X_{B_n} \to \overline{T\cdot \L}$ such that ${\varphi_L}_*[\mathcal O_{X_{B_n}}] = y(\D)$, and by construction $\varphi_L^*[\mathcal O(1)] = [P(\D)]$ and $ \varphi_L^*[\wedge^p \mathcal{Q}^{\vee}] = [\wedge^p \mathcal{Q}_{\D}^{\vee}]$.  Hence, by the projection formula, we have that
$$R_{y(\D) \cdot [\mathcal{O}(1)]}(v) = 
 \sum_{p \ge 0} \chi(X_{B_n}, [P(\D)] \cdot [\wedge^p \mathcal{Q}_{\D}^{\vee}]) v^p.$$
Applying Theorem~\ref{thm:HRR} and Proposition~\ref{prop:niceChern}, we get that
\begin{equation*} \begin{split}
R_{y(\D) \cdot [\mathcal{O}(1)]}(v) &= \frac{1}{2^n} \int_{X_{B_n}} \frac{1}{1 - \gamma} \cdot c(\mathcal{I}_{\D}^{\vee}) \cdot (v + 1)^{n+1} \cdot  c\left(\mathcal{I}_{\D}, \frac{v - 1}{v + 1}\right) \cdot c(\mathcal{I}_{\D}) \\ 
& =\frac{(v + 1)^{n+1}}{2^n} \int_{X_{B_n}} \frac{1}{1 - \gamma}   \cdot  c\left(\mathcal{I}_{\D}, \frac{v - 1}{v + 1}\right) \\ 
&= (v + 1) \operatorname{Int}_{\D}(v).
\end{split}\end{equation*}
In the second line we used Proposition~\ref{prop:cancel}, and in the third line we used Proposition~\ref{thm:intersection}.
\end{proof}

\section{Structure sheaves of orbit closures}\label{sec:egs}

We noted in \Cref{rem:OTL} that, using the formula in \Cref{prop:OTL}, one may assign a $K$-class $[\mathcal{O}_{\overline{T \cdot \D}}]$ to a delta-matroid $\D$, different from $y(\D)$.  It has the feature that $[\mathcal{O}_{\overline{T \cdot \D}}] = [\mathcal{O}_{\overline{T \cdot \L}}]$ whenever $\D$ has a realization $\L\in \operatorname{OGr}(n;2n+1)$.
Here, we collect various examples and questions about this $K$-class.
The Macaulay2 code used for the computation of these examples can be found at \url{https://github.com/chrisweur/KThryDeltaMat}.
A database of small delta-matroids can be found at \url{https://eprints.bbk.ac.uk/id/eprint/19837/} \cite{FMN}.

\medskip
We start with the smallest example where $y(\D) \neq [\mathcal{O}_{\overline{T \cdot \D}}]$.

\begin{eg}\label{eg1}
Let $L \subset \kk^7$ be the maximal isotropic subspace given by the row span of the matrix 
$$\begin{pmatrix} 1 & 0 & 0 & 0 & a & b & 0 \\ 0 & 1 & 0 & -a & 0 & c & 0 \\ 0 & 0 & 1 & -b & -c & 0 & 0 \end{pmatrix}$$
for $a, b, c$ generic elements of $\kk$. Then the  delta-matroid $\D$ represented by $L$ has feasible sets 
\[
\{1, 2, 3\}, \{1, \bar{2}, \bar{3}\}, \{\bar{1}, 2, \bar{3}\}, \{\bar{1}, \bar{2}, 3\}.
\]
The stabilizer of $\L$ is $\{(1,1,1), (-1, -1, -1)\} \in T$, so the map $X_{B_3} \to \overline{T \cdot \L}$ is a double cover.
This implies that $y(\D) \not= [\mathcal{O}_{\overline{T \cdot \L}}]$.  Alternatively, one can verify that $P(\D)$ is not very ample with respect to $\ZZ^3$ and use \Cref{prop:OTL}.
We have $\pi_{1*} \pi_n^* ([\mathcal{O}_{\overline{T \cdot \L}}] \cdot [\mathcal{O}(1)])  = R_{[\mathcal{O}_{\overline{T \cdot \L}}] \cdot [\mathcal{O}(1)]}(u-1)$ by Proposition~\ref{prop:transfer}.
A computer computation shows that
$$R_{[\mathcal{O}_{\overline{T \cdot \L}}] \cdot [\mathcal{O}(1)]}(v) = 4v^2 + 8v + 4 = (v + 1)\operatorname{Int}_{\D}(v).$$
In other words, here \Cref{thm:FS} holds with $[\mathcal{O}_{\overline{T \cdot \L}}]$ in place of $y(\D)$ even though $y(\D) \not= [\mathcal{O}_{\overline{T \cdot \L}}]$.
\end{eg}

Let us say that a delta-matroid has property \eqref{star} if \Cref{thm:FS} holds with $[\mathcal{O}_{\overline{T \cdot \D}}]$ in place of $y(\D)$, that is, by \Cref{prop:transfer}, if
\begin{equation}\tag{$*$}\label{star}
R_{[\mathcal{O}_{\overline{T \cdot \D}}] \cdot [\mathcal{O}(1)]}(v) =  (v + 1)\operatorname{Int}_{\D}(v).
\end{equation}
We now feature an example where \eqref{star} fails.

\begin{eg}\label{eg2}
Let $\D$ be the delta-matroid with feasible sets
\begin{equation*}\begin{split}
&\{\bar{1}, \bar{2}, \bar{3}, \bar{4}\}, \{1, \bar{2}, \bar{3}, \bar{4}\}, \{\bar{1}, 2, \bar{3}, \bar{4}\}, \{\bar{1}, \bar{2}, 3, \bar{4}\}, \{\bar{1}, \bar{2}, \bar{3}, 4\},  \{\bar{1}, 2, 3, 4\}, \{1, \bar{2}, 3, 4\}, \{1, 2, \bar{3}, 4\}, \{1, 2, 3, \bar{4}\}.
\end{split}\end{equation*}
A computer computation shows that $(v + 1)\operatorname{Int}_{\D}(v) = 9 + 16v + 7v^2$, but
$$R_{[\mathcal{O}_{\overline{T \cdot \D}}] \cdot [\mathcal{O}(1)]} (v) = 9 + 16v + 6v^2 - v^3 + v^4 + v^5.$$
\end{eg}

A computer search shows that \Cref{eg2} is the only delta-matroid up to $n=4$ that fails \eqref{star}.
The delta-matroids in the above two examples differ in the following ways.  The delta-matroid in \Cref{eg1}
\begin{itemize}
\item is realizable,
\item is \emph{even} in the sense that the parity of $|B\cap [n]|$ is constant over all feasible sets $B$, and
\item has the polytope $P(\D)$ very ample with respect to the lattice (affinely) generated by its vertices.
\end{itemize}
The last property, when $\D$ has a realization $\L$, is equivalent to stating that $\overline{T\cdot \L}$ is a normal variety.
All three properties fail for the delta-matroid in \Cref{eg2}.  We thus ask:

\begin{question}
When does \Cref{thm:FS} hold with $[\mathcal{O}_{\overline{T \cdot \D}}]$ in place of $y(\D)$?  More specifically, is \eqref{star} satisfied when
\begin{itemize}
\item $\D$ is realizable?
\item $\D$ is an even delta-matroid?
\item the polytope $P(\D)$ is very ample with respect to the lattice (affinely) generated by its vertices?
\end{itemize}
\end{question}

We expect \eqref{star} to fail for some realizable delta-matroid, but we do not know any examples.
We conclude with the following realizable even delta-matroid example.

\begin{eg}
Let $G$ be the graph with vertex set $[7]$ and edges $\{12,13,23,34,45,56,57,67\}$. Let $A(G)$ be its adjacency matrix, considered over $\mathbb F_2$ so that it is skew-symmetric with diagonal entries equal to zero.  Let $\D$ be the delta-matroid realized by the row span of the $7\times (7+7+1)$ matrix $[ \  A \ | \ I_7 \ | \ 0 \ ]$.  That is, its feasible sets are
\[
\left\{\begin{matrix} \text{maximal admissible subsets $B \subset [7,\bar 7]$ such that the principal minor}\\ \text{of $A(G)$ corresponding to the subset $B\cap [7]$ is nonzero}\end{matrix}\right\}.
\]
The polytope $P(\D)$ is not very ample with respect to the lattice (affinely) generated by its vertices, demonstrated as follows.
One verifies that $P(\D)$ contains the origin, and the semigroup $\ZZ_{\geq 0}\{P(\D) \cap \ZZ^7\}$ is generated by
\[
\{\be_{12},\be_{13},\be_{23},\be_{34},\be_{45},\be_{56},\be_{57},\be_{67}\}.
\]
In the intersection of the cone $\RR_{\geq 0}\{P(\D)\}$ and the lattice $\ZZ\{P(\D) \cap \ZZ^7\}$, we have the point
\[
(1,1,1,0,1,1,1) = \frac{1}{2}(\be_{12}+\be_{13} + \be_{23}) + \frac{1}{2}(\be_{56}+\be_{57} + \be_{67}) = \be_{13} + \be_{23} - \be_{34} + \be_{45} + \be_{67},
\]
but this point is not in the semigroup $\ZZ_{\geq 0}\{P(\D) \cap \ZZ^7\}$. In particular, the torus-orbit-closure is not normal. 
Nonetheless, this even delta-matroid satisfies \eqref{star}: a computer computation shows that
$$R_{[\mathcal{O}_{\overline{T \cdot \D}}] \cdot [\mathcal{O}(1)]} (v) = 32 + 92v+ 92v^2 + 36v^3+ 4v^4 = (v + 1)\operatorname{Int}_{\D}(v) .$$
\end{eg}

\small
\bibliography{matroid.bib}
\bibliographystyle{alpha}

\end{document}